\theoremstyle{plain}
\newtheorem{theorem}{Theorem}
\newtheorem{lemma}{Lemma}
\newtheorem{proposition}{Proposition}
\theoremstyle{definition}
\newtheorem{definition}{Definition}
\theoremstyle{remark}
\newtheorem{remark}{Remark}
\newcommand{\Z}{\mathbb{Z}}
\newcommand{\C}{\mathbb{C}}
\newcommand{\R}{\mathbb{R}}
\numberwithin{equation}{section} 
\newcommand{\Ga}{\ensuremath{\Gamma}}
\newcommand{\rmap}{\longrightarrow}
\newcommand{\N}{\ensuremath{\mathbb {N}}}
\newcommand{\U}{\ensuremath{\mathcal{U}}}
\newcommand{\A}{\ensuremath{\mathcal{A}}}
\newcommand{\F}{\ensuremath{\mathcal{F}}}
\newcommand{\es}{\ensuremath{\mathcal{S}}}
\newcommand{\G}{\ensuremath{\mathcal{G}}}
\newcommand{\g}{\ensuremath{\Gamma}}
\newcommand{\ps}{{\raise 1pt\hbox{\tiny (}}}
\newcommand{\pss}{{\raise 1pt\hbox{\tiny [}}}
\newcommand{\pdd}{{\raise 1pt\hbox{\tiny ]}}}
\newcommand{\pd}{{\raise 1pt\hbox{\tiny )}}}
\newcommand{\bs}{{\raise 1pt\hbox{\tiny [}}}
\newcommand{\bd}{{\raise 1pt\hbox{\tiny ]}}}
\def\cross{\mathinner{\mathrel{\raise0.8pt\hbox{$\scriptstyle>$}}
                 \joinrel\mathrel\triangleleft}}
\def\U{\mathcal{U}}
\def\K{\mathcal{K}}
\newcommand{\be}{\begin{equation}}
\newcommand{\ee}{\end{equation}}
\newcommand{\nn}{\nonumber \\}
\newcommand{\wt}{\mbox{\rm wt}\ }
\newcommand{\nc}{\newcommand}
\nc{\cali}{\mathcal}
\nc{\on}{\operatorname}
\nc{\Wick}{{\mb :}}
\nc{\ddz}{\frac{\partial}{\partial z}}
\nc{\ch}{\mbox{ch}}
\nc{\Oo}{{\cali O}}
\nc{\cond}{|\,}
\nc{\bib}{\bibitem}
\nc{\pone}{\Pro^1}
\nc{\pa}{\partial}
\nc{\arr}{\rightarrow}
\nc{\larr}{\longrightarrow}
\nc{\ket}{)}
\nc{\bra}{(}
\nc{\gam}{\bar{\gamma}}
\nc{\q}{\widetilde{Q}}
\nc{\ep}{\epsilon}
\nc{\su}{\widehat{{\mf s}{\mf l}}_2}
\nc{\sw}{{\mf s}{\mf l}}
\nc{\h}{{\mf h}}
\nc{\n}{{\mf n}}
\nc{\ab}{\mf{a}}
\nc{\is}{{\mb i}}
\nc{\js}{{\mb j}}
\nc{\bi}{\bibitem}
\nc{\He}{{\cali H}}
\nc{\inv}{^{-1}}
\nc{\ol}{\overline}
\nc{\wh}{\widehat}
\nc{\dst}{\displaystyle}
\nc{\delt}{\partial_t}
\nc{\ddt}{\frac{\partial}{\partial t}}
\nc{\delx}{\partial_x}
\nc{\mb}{\mathbf}
\nc{\mf}{\mathfrak}
\nc{\mbb}{\mathbb}
\nc{\Ctt}{\C((t))}
\nc{\Ct}{\C[t,t\inv]}
\nc{\ghat}{\wh{\g}}
\nc{\un}{\underline}
\nc{\mc}{\mathcal}
\nc{\BB}{{\mc B}}
\nc{\bb}{{\mf b}}
\nc{\kk}{{\mf k}}
\nc{\frob}{\times}
\nc{\sm}{\setminus}
\nc{\Pp}{{\mathbb P}^1}
\nc{\Aa}{{\mc A}}
\nc{\AutO}{\on{Aut}\Oo}
\nc{\AUTO}{\un{\on{Aut}}\Oo}
\nc{\AUTK}{\un{\on{Aut}}\K}
\nc{\Heout}{\He_{\out}}
\nc{\Hetil}{{\widetilde\He}}
\nc{\wb}{\overline}
\nc{\Res}{\on{Res}}
\nc{\pitil}{\Pi}
\nc{\Ctil}{\wt{C}}
\nc{\auto}{\on{Aut} \Oo}
\nc{\phitil}{\wt{\phi}}
\nc{\gz}{\g_{\vec z}}
\nc{\tensorM}{\bigotimes_{i=1}^N{\mathbb M}_i}
\nc{\tensorW}{\bigotimes_{i=1}^N W_{\nu_i,k}}
\nc{\out}{\on{out}}
\nc{\m}{{\mathfrak m}}
\nc{\gx}{\g^0_{\vec x}}
\nc{\hx}{\He^0_{\vec x}}
\nc{\tensorpi}{\pi_{\nu_1,\ldots,\nu_N}^\kappa}
\nc{\Phizw}{\Phi_{\vec w}({\vec z})}
\nc{\Pro}{{\mathbb P}}
\nc{\De}{D}
\nc{\us}{\underset}
\nc{\Ll}{\mc L}
\nc{\dR}{\on{dR}}
\nc{\T}{{\mc T}}
\nc{\Xn}{\overset{\circ}X{}^n} \nc{\Dn}{\overset{\circ}D{}^n}
\nc{\Dxn}{\overset{\circ}D{}^n_x} \nc{\varphitil}{\wt{\varphi}}
\nc{\lf}{{\mf l}}
\nc{\GL}{{}^L G}
\nc{\Vir}{\on{Vir}}
\nc{\bfgn}{{\bf g}_n}
\nc{\bfzn}{{\bf z}_n}
\begin{document}

\title[Cosimplicial meromorphic functions cohomology on complex manifolds]  
{Cosimplicial meromorphic functions cohomology on complex manifolds}  
                                
\author{A. Zuevsky} 
\address{Institute of Mathematics \\ Czech Academy of Sciences\\ Zitna 25, Prague\\ Czech Republic}

\email{zuevsky@yahoo.com}

\begin{abstract}
Developing ideas of \cite{Fei}, 
 we introduce canonical cosimplicial cohomology of meromorphic functions
for infinite-dimensional Lie algebra formal series 
 with prescribed analytic behavior  
on domains of a complex manifold $M$.  
Graded differential cohomology of a sheaf of Lie algebras $\mathcal G$  
via the cosimplicial cohomology of $\mathcal G$-formal series for any covering by Stein spaces on $M$ is computed.  
A relation between cosimplicial cohomology (on a special set of open domains of $M$)  
of formal series of an infinite-dimensional Lie algebra $\mathcal G$ 
and singular cohomology of auxiliary manifold associated to a $\mathcal G$-module is found.   
Finally, multiple applications in conformal field theory,  
 deformation theory, and in the theory 
of foliations are proposed. 

\bigskip 
AMS Classification: 53C12, 57R20, 17B69 
\end{abstract}

\keywords{Meromorphic functions, cosimplicial cohomology, complex manifolds}
\vskip12pt  

\maketitle

\section{Introduction}
The continuous cohomology of Lie algebras of ${C}^{\infty}$-vector fields \cite{BS, Fuk, FF1988} has proven to be
a subject of great geometrical interest.  
There exists the natural problem of calculating the continuous 
cohomology of 
meromorphic structures on 
complex manifolds \cite{Fei, wag, Khor, Kaw, Hae, BS}.  
In \cite{Fei} Feigin obtained various results concerning (co)homologies of certain Lie algebras 
associated to a complex curve $M$.   
For the 
 Hodge decomposition  of the tangent bundle complexification of $M$  
corresponding 
Lie bracket in the space of holomorphic 
vector fields extends to a differential Lie superalgebra structure on the Dolbeault complex. 
This 
is called the differential Lie superalgebra $\Gamma(Lie(M))$ of holomorphic vector fields on $M$. 
The 
Lie algebra of holomorphic vector fields $Lie_{D}(M)$  
is defined as the cosimplicial object in the 
category of Lie algebras obtained from a covering   
of $M$ by associating to any $i_1<i_2< \ldots <i_l$,  
the Lie algebra of holomorphic vector fields $Lie( U_{i_1} \cap  \ldots  \cap U_{i_l})$.   
%
In \cite{Fei} the author calculates the continuous (co)homologies 
with coefficients in certain one-dimensional representations $\tau_{c_{p,q}}$ of these Lie (super)algebras 
where $c_{p,q}$ denotes the value of the central charge for corresponding Virasoro algebra. 
 The main 
 result 
states that 
$H_0(Lie_D(M), \tau_{c_{p,q}} )$ is isomorphic to  $H(M,p,q)$, where 
 the representation $\tau_{c_{p,q}}$ 
is derived from a vacuum representation of the Virasoro algebra, 
 and $H(M,p,q)$ is the modular functor for the minimal conformal field theory \cite{DiMaSe}. 
The algebra 
$H^*_c(\Gamma(Lie(M)))$ of continuous cohomologies acts naturally on $H_*(Lie_D(M),\tau_{c_{p,q}})$,   
and the dual space is a free $H^*_c(\Gamma(Lie(M)))$-module 
with generators in degree zero.

The paper \cite{wag} 
continues the work of Feigin 
\cite{Fei} and Kawazumi 
\cite{Kaw}
 on the Gelfand-Fuks cohomology of the Lie algebra of
holomorphic vector fields on complex manifolds.  
%
To enrich the cohomological structure, one has to involve  
cosimplicial and graded differential Lie algebras  
 well known in Kodaira-Spencer deformation theory.
The idea to use cosimplicial spaces to study the cohomology of mapping spaces 
goes back at least to 
 Anderson \cite{A},   
and it was further developed 
in \cite{BS}.
In \cite{wag}
 they compute the 
corresponding cohomologies
for arbitrary complex
manifolds up to calculation of 
cohomology of 
sections spaces of complex bundles on extra manifolds.  
 The results obtained are 
 very similar to the results of Haefliger \cite{Hae} and \cite{BS} 
in the case of ${C}^{\infty}$ vector fields. 
Following constructions of \cite{Fei}
applications in conformal
field theory (for Riemann surfaces), deformation theory, and foliation theory were proposed. 
In addition to that, in \cite{wag} 
the Quillen functor scheme was used for  
 the sheaf of holomorphic 
vector fields 
on a complex manifold, and its fine
resolution was given by the sheaf 
of $d\bar{z}$-forms with
values in holomorphic vector fields, the sheaf of Kodaira-Spencer 
algebras.

  Let $M$ be a smooth compact manifold and  $Vect(M)$ be the Lie algebra of vector fields on $M$.   
Bott and Segal \cite{BS} 
proved that the Gelfand-Fuks cohomology $H^*(Vect(M))$ 
 is isomorphic to the singular cohomology $H^*(E)$ of the space $E$ of continuous cross sections of a certain 
fibre bundle $\mathcal E$ over $M$. 
Authors of \cite{PT, Sm} continued to use advanced topological methods for more general cosimplicial spaces of maps.   

The main purposes of this paper are: 
 to compute the cosimplicial version of cohomology of  
meromorphic functions with prescribed analytic behavior on domains of arbitrary complex manifolds, 
and to find relations with 
other types of cohomologies.   
We also propose applications in conformal field theory, deformation theory, cohomology and characteristic 
classes of foliations on smooth manifolds. 
 
As it was demonstrated in \cite{wag}, the ordinary 
 cohomology of vector fields on complex manifolds turns to be not the most effective and general one.   
In order to avoid trivialization 
and reveal a richer cohomological structure  
of complex manifolds cohomology, one has to 
treat \cite{Fei}  holomorphic vector fields as a sheaf rather than taking 
global sections. 
%
Inspite results in previous approaches, it is desirable to     
  find a way to enrich cohomological structure which motivates 
 construction of more refined cohomology description for non-commutative 
algebraic structures.  
The idea of meromorphic function cosimplician cohomology for complex manifolds was outlined in \cite{Fei} in 
conformal field theory form (for Riemann surfaces)  
and is developing in this paper. 
In particular, 
 we study relations of the sheaf 
of meromorphic functions associated to certain Lie algebras   
to the sheaf $\mathfrak{g}$  
of vector valued
differential forms.  
\section{Meromorphic functions with prescribed analytic behavior} 
In this section the space underlying cohomological complexes is defined in terms of meromorphic 
functions with certain properties \cite{H2, Huang}, in particular, prescribed analytic behavior. 
In order to shorthand expressions and notations, we call such functions mero functions. 
Mero functions depend implicitly on an infinite number of non-commutative parameters via 
Lie-algebra valued formal series in several complex variables. 

\subsection{Algebraic completion of the space of formal $\mathcal G({\bf z})$-module valued series}
In this subsection we describe the algebraic completion of the space of an infinite-dimensional 
Lie algebra $\mathcal G$ module-valued formal series.  
In the whole body of the paper we use the notation 
 ${\bf y}_{(n, 1), \ldots,  (n, k)}$, ${\bf y}_n=(y_1, \ldots, y_n)$,  for $n \ge 0$ 
sets of $k_i$, $1 \le i \le n$ of $k_i$ variables.  
If $n$  and $k_i$ are not explicitly specified we write ${\bf y}$. 
Let $\mathcal G$ be an infinite-dimensional Lie algebra  
 generated by $g_i$, $i \in \Z$.  
%
Let $\mathcal G({\bf z})$ be the space of power series in several complex formal variables.  
In order to define a specific space of meromorphic functions associated to $\mathcal G$ 
and satisfying certain properties mentioned later on, 
we have to (as in \cite{Huang}) work with the algebraic completion of a $\mathcal G$-module $W$ \cite{Huang}. 
 In particular, for that purpose, we have to consider 
elements of a $\mathcal G$-module $W$ with inserted 
exponentials of the grading operator $L(0)$, i.e., of the form 
$\sum\limits_{n \in \Z} z^{-n-1}  \; a^{L(0)} v_n \; b^{L(0)} v$. 
For general $a$, $b$, $z \in \C^\times$, 
 such elements  
do not satisfy the properties (as elements of $W$ do) needed  to construct a closed theory of meromophic functions with 
prescribed properties. 
Thus we have to extend $W$ algebraically (and analytically), i.e., include extra elements  
to make the structure of $W$ compatible with the descending filtration with respect to the grading subspaces, 
and analytic properties with respect to formal parameter $z$. 
Thus we end up with are elements of the algebraic completion $\overline{W} = \prod_{n\in \Z} W(n)$, of $W$, 
that has the structure that is complete in the
 topology determined by the filtration and the residue pairing above. 
Recall that 
$W'=\coprod_{n\in \mathbb{Z}}W_{(n)}^{*}$.  
For a infinite-dimensional Lie algebra $\mathcal G$-module $W$, 
let $G$ be the algebraic completion of the space of $W$-valued formal series 
 $G=\prod_{n\in \C} W_{(n)}=(W')^{*}$,  
endowed with a complex grading (with respect to a grading operator $K_G$).  
We assume that on the space of formal series associated to $\mathcal G$, 
 there exists a non-degenerate bilinear pairing $( ., .)$,  
and matrix elements of $G$ elements are given by this pairing. 
In addition to that, we consider an analytical extension with respect to the pairing 
${\rm Res}_z ( ., .  )$ for $G$. 
\subsection{Meromorphic functions with non-commutative parameters}
\label{functional}
%
In this subsection we give axiomatic definition of meromorphic functions with specific properties. 
Let $M$ be an $n$-dimensional smooth complex manifold. 
Let ${\bf p}_l$ be a set of $l$ points on $M$.
For each $p_i$, $1 \le i \le l$, let $U_i$
 be an open domain surrounding $p_i$. 
Let us identify formal variables ${\bf z}_{n,l}$ with $l$ sets of $n$ local coordinates on $\U_l$. 
In this paper we consider 
 meromorphic functions 
 of several complex variables  
 defined on sets of open  
 domains of $M$ with local coordinates ${\bf z}_{l}$    
 which are extandable to meromorphic functions  
 on larger domains on $M$. We denote such extensions by $R(f({\bf z}_{l}))$.   
Denote by $F_{n}\mathbb C$ the  
configuration space of $l \ge 1$ ordered coordinates in $\mathbb C^{ln}$, 
\[
F_{ln}\mathbb C=\{{\bf z}_l \in \mathbb C^{ln}\;|\; z_{i, l} \ne z_{j, l'}, i\ne j\}.
\] 
We assume that there exists a non-degenerate bilinear pairing $( .,. )$ on $G$, and 
 denote by $\widetilde G$ the space dual with respect to this pairing.   
In order to work with objects having coordinate invariant formulation \cite{BZF},  
for a set of $G$-elements ${\bf g}_{l}$  
we consider converging 
meromorphic  
functions $f({\bf x}_{l})$ of ${\bf z}_{l} \in F_{ln}\mathbb C$, 
with ${\bf x}_{l}= ({\bf g}_{l}, {\bf z}_{l} {\bf dz}_{l})$,  
where ${\bf z}_{l}$ are multiplied by corresponding differentials ${\bf dz}_{l}$. 
\begin{definition}
For arbitrary $\vartheta \in \widetilde G$,  
we call 
 a map linear in ${\bf g}_l$ and ${\bf z}_l$, 
\begin{equation}
F: {\bf x}_l
\mapsto   
\label{deff}
    R(\vartheta, f({\bf x}_l 
)), 
\end{equation}
  a meromorphic function in ${\bf z}_l$   
with the only possible poles at 
$z_{i, l}=z_{j, l'}$, $i\ne j$. 
Abusing notations, we denote $F({\bf x}_l)= R(\vartheta, f({\bf x}_l))$.  
\end{definition}
\begin{definition}
We define  left action of the permutation group $S_{ln}$ on $F({\bf z}_l)$ 
by
\[
\nc{\bfzq}{{\bf z}_l}
\sigma(F)({\bf x}_l)=F({\bf g}_l, {\bf z}_{\sigma(i)}).  
\]
\end{definition}
In particular, meromorphic functions described above can be realized as $R(( ., .))$
 of the bilinear pairing. 
\subsection{Conditions on meromorphic functions} 
Let ${\bf z}_l \in F_{ln}\C$.  
Denote by $T_G$ the translation operator \cite{K}. 
We define now extra conditions leading to the definition of restricted  
meromorphic functions. 
\begin{definition}
\label{tupo1}
Denote by $(T_G)_i$ the operator acting on the $i$-th entry.
 We then define the action of partial derivatives on an element $F({\bf x}_l)$  
\begin{eqnarray}
\label{cond1}
\partial_{z_i} F({\bf x}_l)  &=& F((T_G)_i \; {\bf g}_l, {\bf z}_l),   
\nn
\sum\limits_{i \ge 1} \partial_{z_i}  F({\bf x}_l)  
&=&  T_{G} F({\bf x}_l),   
\end{eqnarray}
\end{definition}
and call it $T_{G}$-derivative property. 
\begin{definition} 
\label{tupo2}
For   $z \in \C$,  let 
\begin{eqnarray}
\label{ldir1}
 e^{zT_G} F ({\bf x}_l)   
 = F({\bf g}_l, {\bf z}_l +z). 
\end{eqnarray}
 Let 
 ${\rm Ins}_i(A)$ denotes the operator of multiplication by $A \in \C$ at the $i$-th position. Then we define   
\begin{equation}
\label{expansion-fn}
F({\bf g}_l, {\rm Ins}_i(z) \; {\bf z}_l)=  
F( {\rm Ins}_i (e^{zT_G}) \; {\bf g}_l, {\bf z}_l), 
\end{equation}
are equal as power series expansions in $z$, in particular, 
 absolutely convergent
on the open disk $|z|<\min_{i\ne j}\{|z_{i, l}-z_{j, l'}|\}$. 
\end{definition}
\begin{definition}
A meromorphic function has $K_G$-property   
if for $z\in \C^{\times}$ satisfies 
$z{\bf  z}_l \in F_{ln}\C$,  
\begin{eqnarray}
\label{loconj}
z^{K_G 
} F ({\bf x}_l) = 
 F (z^{K_G} {\bf g}_l, 
 z\; {\bf z}_l). 
\end{eqnarray}
\end{definition}

\subsection{Meromorphic functions with prescribed analytical behavior}
In this subsection we give the definition of meromorphic functions with prescribed analytical behavior
on a domain of complex manifold $M$ of dimension $n$.
To shorthand, we call such functions mero functions.     
We denote by $P_{k}: G \to G_{(k)}$, $k \in \C$,      
the projection of $G$ on $G_{(k)}$.
For each element $g_i \in G$, and $x_i=(g_i, z)$, $z\in \C$ let us associate a formal series  
$W_{g_i}(z)= W(x_i)=  
\sum\limits_{k \in \C }  g_{i} \; z^{k} \; dz $, $i \in \Z$. 
  Following \cite{Huang}, we formulate 
\begin{definition}
\label{defcomp}
We assume that there exist positive integers $\beta(g_{l', i}, g_{l", j})$ 
depending only on $g_{l', i}$, $g_{l'', j} \in G$ for 
$i$, $j=1, \dots, (l+k)n $, $k \ge 0$, $i\ne j$, $ 1 \le l', l'' \le n$.  
 Let   
${\bf l}_n$ be a partition of $(l+ k)n     
=\sum\limits_{i \ge 1} l_i$, and $k_i=l_{1}+\cdots +l_{i-1}$. 
For $\zeta_i \in \C$,  
define 
$h_i  
=F 
({\bf W}_{ { \bf g}_{ k_i+{\bf l}_i  }} ( 
 {\bf z}_{k_i + {\bf l}_i  }- \zeta_i ))$, 
for $i=1, \dots, ln$.
We then call a meromorphic function $F$ satisfying properties \eqref{cond1}--
\eqref{loconj},
a meromorphic function with prescribed analytical behavior, of a mero function, if 
under the following conditions on domains, 
\[
|z_{k_i+p} -\zeta_{i}| 
+ |z_{k_j+q}-\zeta_{j}|< |\zeta_{i} -\zeta_{j}|,  
\]
for $i$, $j=1, \dots, k$, $i\ne j$, and for $p=1, 
\dots$,  $l_i$, $q=1$, $\dots$, $l_j$, 
the function   
%
 $\sum\limits_{ {\bf r}_n \in \Z^n}  
F( {\bf P_{r_{i}}  h_i}; (\zeta)_{l})$,     
is absolutely convergent to an analytically extension 
in ${\bf z}_{l+k}$, independently of complex parameters $(\zeta)_{l}$,
with the only possible poles on the diagonal of ${\bf z}_{l+k}$   
of order less than or equal to $\beta(g_{l',i}, g_{l'', j})$.   
 In addition to that, for ${\bf g}_{l+k}\in G$,  the series 
$\sum_{q\in \C}$  
$F( {\bf W(g_k}$, ${\bf P}_q ( {\bf W(g}_{l+k}, {\bf z}_k), {\bf z}_{ k + {\bf l} }))$,    
is absolutely convergent when $z_{i}\ne z_{j}$, $i\ne j$
$|z_{i}|>|z_{s}|>0$, for $i=1, \dots, k$ and 
$s=k+1, \dots, l+k$ and the sum can be analytically extended to a
meromorphic function 
in ${\bf z}_{l+k}$ with the only possible poles at 
$z_{i}=z_{j}$ of orders less than or equal to 
$\beta(g_{l', i}, g_{l'', j})$. 
\end{definition}
For $m \in \N$ and $1\le p \le m-1$, 
 let $J_{m; p}$ be the set of elements of 
$S_{m}$ which preserve the order of the first $p$ numbers and the order of the last 
$m-p$ numbers, that is,
$$J_{m, p}=\{\sigma\in S_{m}\;|\;\sigma(1)<\cdots <\sigma(p),\;
\sigma(p+1)<\cdots <\sigma(m)\}.$$
Let $J_{m; p}^{-1}=\{\sigma\;|\; \sigma\in J_{m; p}\}$.  
In addition to that, for some meromorphic functions require the property: 
\begin{equation}
\label{shushu}
\sum_{\sigma\in J_{ln; p}^{-1}}(-1)^{|\sigma|} 
\sigma( 
F ({\bf g}_{\sigma(i)}, {\bf z}_l) 
)=0. 
\end{equation}
Finally, we formulate 
\begin{definition}
\label{poyma}
We define the space $\Theta(ln, k, U)$ of all $l$ complex $n$-variable restricted    
 meromorphic functions 
 with prescribed analytical behavior 
on a $F_{ln}\C$-domain $U \subset M$  and   
satisfying 
 $T_G$- and $K_G$-properties \eqref{cond1}--
\eqref{loconj}, 
 (definition \eqref{defcomp},  
  and \eqref{shushu}).   
\end{definition}
\section{Properties of cosimplicial double complex spaces} 
\label{cohass}
In this section we define the double complexes of mero 
function cohomology 
on a complex manifold $M$ of complex dimension $n$. 
\subsection{Spaces of cosimplicial double complexes} 
In \cite{GF} the original approach to cohomology of vector fields of manifolds 
was initiated.   
 Another approach  to cohomology   
 of the Lie algebra of vector fields on a manifold in the cosimplicial setup we find  
in \cite{Fei, wag}.   
Let $\U$ be a covering $\left\{ U_j \right\}$ on $M$, 
and ${\bf z}_{l, j}$ be 
$l$ sets of local complex coordinates on each domain $U_j$ 
around $l$ points ${\bf p}_{l, j}$. 
For a set of $G$-elements  
${\bf g_l}$,   
and differentials ${\bf dz}_{l, j}$,  
we consider 
 ${\bf x}_{l, j}= ({\bf g}_l, {\bf z}_{l, j} \;{\bf dz}_{l, j})$.  
\begin{definition}
For a domain $U \subset M$, and 
 $l$, $k \ge 0$,    
we denote by ${C}^{l}( \Theta(ln, 1), U)$       
the space of all mero 
functions $\Theta(ln, 1)$ with prescribed analytic behavior with respect to 
  $l$ sets of $n$ complex coordinates introduced on $U$.
\end{definition}
\begin{remark}
Note that according to our construction, $M$ can be infinite-dimensional. Thus, in that case, 
 we consider $l$ infinite sets of complex coordinates. 
 The set of $ln$ $G$-elements 
${\bf g_{l} }$ 
plays the role of non-commutative parameters in our cohomological construction. 
\end{remark}
Using the standard method of defining canonical (i.e., independent of the choice  
of covering $\U$) cosimplicial object \cite{Fei, wag}, 
 we consider mero 
functions 
$F ( {\bf x}_{l, j} )$ on $\U$,   
 and give the following definition of a general cosimplicial double complex for $\mathcal G$.   
\begin{definition}
Choose a covering $\U=\left\{U_i, i \in I\right\}$ of $n$-dimensional complex manifold $M$. 
Let us associate to any subset
$\left\{i_1< \cdots < i_k\right\}$ of $I$, 
the space of restricted meromorphic functions  
converging on the intersection 
$\left\{U_{i_1}\cap \ldots \cap U_{i_k}\right\}$. Let us introduce the space 
\begin{equation}
\label{ourbi-complex} 
 {C}^{l}_{k  }(G, \U) =  
 C^{l}
\left(  \Theta(ln, kn), \; \bigcap_  
 {
i_1 \le \ldots \le i_k, k \ge 0 } U_{ i_k} 
\right). 
\end{equation}
 We call this space a cosimplicial cohomology object in the category of algebras of 
mero functions on $M$.  
\end{definition}
\subsection{Co-boundary operators}
\label{coboundaryoperator}
Let us take 
${C}_{k}^{0}
= G$. 
 Then we have 
\[
 {C}_{k}^{l}
\subset  {C}_{k-1}^{l}, 
\]
when lower index is zero the sequence terminates.   
We also define 
\[
C_{\infty}^{n}= \bigcap_{m\in \N}C_{m}^{n}. 
\]
We organize elements of ${\bf x}_{(n,l)} =\left({\bf x}_{n, 1} \ldots, {\bf x}_{n,l} \right)$ 
 as $l$ groups by $n$ elements. 
Denote by ${\bf x}_{(n,\widehat {i}, \ldots, \widehat{j}) }$, $1 \le i, j \le l$, 
the set ${\bf x}_{(n, l)}$ with ${\bf x}_{n, i}, \ldots, {\bf x}_{n,j}$ omitted. 
For 
$F \in {C}_{k }^{l}$, 
we define  
the operator $D^{l }_{k }$ by   
\begin{eqnarray}
\label{hatdelta}
{D}^{l }_{k } F({\bf x}_{(n,l)}) &=& T_1(W_W({\bf x}_{n, 1} )).F({\bf x}_{(n, \widehat {1}) })     
\nn
&+&\sum_{i=1}^{l }(-1)^{i} 
T_i( W_V({\bf x}_{n, i})  )T_{i+1}(W_V({\bf x}_{n, i+1})).
F\left( {\bf x}_{(n, \widehat{i}, \widehat{i+1} )  }  
  \right)       
\nn
 &+&(-1)^{l+1}  
 T_1(W_W({\bf x}_{n, l+1})).F\left( {\bf x}_{n, \widehat {l+1}}  \right), 
\end{eqnarray}
where $T_i(\gamma).$ denotes insertion of $\gamma$ at $i$-th position of $F$. 

In \cite{Huang} we find the construction of double chain-cochain complex for a space of 
meromorphic functions compatible with a few formal series as in definition \ref{defcomp}. 
In particular, (c.f. Proposition 4.1), the chain condition for such double complex is proven.  
Here we use that construction to prove the chain property for the operator \eqref{hatdelta} 

\begin{proposition}
\label{cochainprop}
The operator \eqref{hatdelta} 
 forms the double chain complex 
\begin{equation}
\label{conde}
  D^{l }_{k  }:  {C}_{k  }^{l }  
\to  {C}_{k-1 }^{l+1},  
\end{equation}  
\begin{equation*}
   D^{l+1 }_{k-1  } \circ {D}^{l }_{k  }=0, 
\end{equation*} 
on the spaces \eqref{ourbi-complex}.
\end{proposition}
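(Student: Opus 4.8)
The plan is to verify the two assertions in Proposition~\ref{cochainprop} separately: first that $D^{l}_{k}$ maps ${C}^{l}_{k}$ into ${C}^{l+1}_{k-1}$ (well-definedness on the cosimplicial double complex), and second that $D^{l+1}_{k-1}\circ D^{l}_{k}=0$. For the first part I would check, term by term in \eqref{hatdelta}, that each summand again belongs to the relevant function space. The insertions $T_i(\gamma).$ are built from the operators $W_W$ and $W_V$ applied to the coordinate blocks ${\bf x}_{n,i}$, and the defining conditions \eqref{cond1}--\eqref{loconj} of a mero function, together with the convergence/analyticity requirements of Definition~\ref{defcomp}, are preserved under such insertions precisely because those axioms were designed to be compatible with the descending filtration by grading subspaces and with the residue pairing; the convergence domains shrink in a controlled way (the new coordinate block is inserted near a chosen point, exactly as in \eqref{expansion-fn}), and the pole orders on the diagonal remain bounded by the same integers $\beta(g_{l',i},g_{l'',j})$. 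The index bookkeeping --- that omitting ${\bf x}_{n,i},{\bf x}_{n,i+1}$ and inserting on the intersection $\bigcap_{i_1\le\cdots\le i_k}U_{i_k}$ lands one in ${C}^{l+1}_{k-1}$ --- is a direct transcription of the cosimplicial structure maps from \cite{Fei, wag}.

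For the nilpotency $D^{l+1}_{k-1}\circ D^{l}_{k}=0$, the strategy is the standard simplicial cancellation argument: expand the double composite as a double sum over pairs of insertion positions, and show the terms cancel in pairs with opposite signs. Concretely, applying $D$ twice produces terms indexed by the pair of blocks that were ``filled in'' (via $W_W$ at the two ends, or via consecutive $W_V$'s in the interior); reindexing so that the two insertions are applied in the opposite order matches each term with another carrying the factor $(-1)^{i}(-1)^{j}$ versus $(-1)^{j-1}(-1)^{i}$ (or the analogous boundary comparisons involving the $(-1)^{l+1}$ end terms), and the associativity/commutativity of the insertions $T_i(\cdot)T_{i+1}(\cdot)$ on disjoint coordinate blocks guarantees the summands are literally equal before the sign. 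Here I would lean explicitly on the chain condition already established in \cite{Huang} (the cited Proposition~4.1 for the double chain--cochain complex of meromorphic functions compatible with several formal series): that result gives the cancellation for the ``analytic'' part of the insertions, and what remains is to check that the sign pattern in \eqref{hatdelta} is exactly the one making the cosimplicial coface relations hold.

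The main obstacle I anticipate is not the combinatorics of signs but the analytic bookkeeping in the first part: one must confirm that after two successive insertions the resulting function still lies in $\Theta$, i.e.\ that the iterated convergence domains of Definition~\ref{defcomp} are mutually compatible and that the ``analytic extension independent of the $\zeta$'s'' property survives composition. This is the point where the precise form of the inequalities $|z_{k_i+p}-\zeta_i|+|z_{k_j+q}-\zeta_j|<|\zeta_i-\zeta_j|$ and the nesting $|z_i|>|z_s|>0$ must be used to show that the doubly-inserted series converges in a common polydisc and extends with pole orders still controlled by the $\beta$'s; I would isolate this as a lemma (or cite the corresponding estimate from \cite{Huang}) and then feed it into both halves of the proof. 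Once well-definedness is in hand, $D^{2}=0$ follows formally from the cosimplicial identities, so the bulk of the genuine work is in that analytic compatibility step.
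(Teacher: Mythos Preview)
Your proposal is essentially correct and follows the same line as the paper's proof: both parts ultimately rest on the results of \cite{Huang} (Propositions~2.8 and~4.1 there), which establish that the insertion operators preserve the $T_G$- and $K_G$-properties and the compatibility of Definition~\ref{defcomp}, and that the resulting coboundary squares to zero. The paper's write-up differs from yours mainly in organization: rather than verifying the axioms and the simplicial cancellation directly for general $n$, it invokes Huang's $n=1$ result verbatim and then appeals to an induction on the dimension $n$ (``increasing recursively for $n\ge 0$''). Your plan to carry out the sign-pairing argument for $D^{2}=0$ explicitly is a legitimate unpacking of what the paper simply cites, and your identification of the analytic compatibility of iterated insertions as the genuine content is exactly the point covered by Huang's Proposition~4.1. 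The only element you do not mention is this inductive passage from $n=1$ to arbitrary $n$; since Huang's estimates are stated for a single formal variable, you should either note that the multi-block case follows by treating the $n$ coordinates in each block iteratively, or cite the induction as the paper does.
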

\begin{definition}
According to this proposition,  
 one defines the 
 $(l, k)$-th mero 
function cosimplicial cohomology $H^{l}_{k, \; cos}(\mathcal G, \U)$ of $M$  
to be 
 $H_{k, \; cos}^l(\mathcal G, \U) ={\rm Ker} \; 
D^l_k/\mbox{\rm Im}\; D^{l-1}_{k+1  }$. 
\end{definition}
\begin{proof}
 We use the proof of Proposition 4.1 of \cite{Huang}. 
In that proof, for the case $n=1$, it was shown that the coboundary operator
 $D^l_k$ acting on elements of $C^l_k$
brings about elements of $C^{l+1}_{k-1}$.  
Namely, for $n=1$ and $F \in \Theta(l+1, m)$ one has  
\begin{eqnarray*}
 D^{l}_{m} F\left(x_1, \ldots, x_{l+1} \right)    
&=&   W_W(x_1) \;  F(x_2, \ldots,  x_{l+1})   
 \nn 
&+&  \sum_{i=1}^{l}(-1)^{i}    
 F(x_{1}, \ldots, x_{i-1}, 
 \; W_{V}(v_{i}, z_{i}-\zeta_{i}) W_{V}(v_{i+1}, z_{i+1}-\zeta_{i}), \;    
\nn
&& \quad\quad\quad\quad
x_{i+2}, \ldots, x_{l+1})  
\ + (-1)^{l+1} W_W(x_{l+1}) \; 
 F(x_{1},  \ldots,  x_{l}),  
\end{eqnarray*}
 By Proposition 2.8 
of \cite{Huang},   
$D^{l}_{k}F$ is  
compatible with $(k-1)$ formal series as in definition \ref{defcomp} ,and has the $T_G$-derivative  \eqref{cond1}  
property and the $K_G$-conjugation  \eqref{ldir1} properties according to definitions \ref{tupo1} and \ref{tupo2}.  
So $D^{l}_{k}F \in 
C_{k-1}^{l+1}$ and $D^l_{k}$ is indeed a map whose image is in 
${C}_{k-1}^{l+1}$. 
The chain property for $D^l_k$ was proven also. 
Applying inductively Proposition 4.1 of \cite{Huang} to our setup and increasing recursively for $n\ge 0$, 
we obtain the result of our proposition. 
\end{proof}
\section{Sheaf formulation of cosimplicial cohomology} 
In this section we generalize the construction of \cite{GF, Fei, wag} and  
replace the algebra of holomorphic vector field used in \cite{wag} 
with $G$-valued series for an infinite-dimensional  
Lie algebra $\mathfrak g$ in the sheaf-theoretical formulation. 
In particular we prove a generalization of Theorem 4 of \cite{wag} and  
relate the graded differential cohomology of the sheaf of $G$-valued formal series 
and the cosimplicial cohomology of the sheaf of mero functions over Stein sets on $M$. 
Let us start with some general definitions needed for further explanations. 
We understand from sections that $G$-valued series for $\mathfrak g$ are formalized via  
the space of mero functions. 
Thus it makes sense to formulate the definition of the sheaf of graded differential 
Lie algebras of $G$-valued formal series for $\mathfrak g$ simultaneously with 
the definition of the sheaf of mero functions.  
Moreover, as we will see in one of next subsections, there exists an equivalence of the cohomology of 
the sheaf of graded differential 
Lie algebras of $G$-valued formal series for $\mathfrak g$  
and cosimiplicial cohomology of a complex for a sheaf of $G$-valued series for $\mathfrak g$.  

%
To any $U$ of $\U$ of $M$ we can assign the set $\Theta(ln, kn, U)$ of mero functions on $U$.  
 The restriction maps are then just given by 
restricting a mero function on $U$ to a smaller open subset $\widetilde{U} \subset U$,  
which, according to definition \ref{defcomp} is again a mero function.
We then immediately check the presheaf axioms. 

\subsection{Coherent sheaf of mero functions} 
$M$ is endowed  with a sheaf
 of rings $\mathcal O_X$, the sheaf of holomorphic functions or regular functions, 
and coherent sheaves are defined as a full subcategory of the category 
of 
$\mathcal O_X$-modules (that is, sheaves of 
$\mathcal O_X$-modules). 
We are able
localize mero functions 
to open subsets
 $U \subset M$.  
The presheaf of mero functions 
 can be glued to global data. 
We formulate the following 
Denote by ${\mathcal F}_M$ the coherent sheaf of mero  
functions on $M$, and consider ${\mathcal F}_M$-modules.  
The sheaf of mero functions on $M$ is defined by 
and ${\mathcal F}_M$-modules. 
As in \cite{wag} we transfer to the sheaf setup of $G$-valued series and mero functions. 
Let us denote by ${\mathcal O}_M$ the coherent sheaf of 
holomorphic functions on $M$ and by ${\mathcal E}_M$ the sheaf of
$C^{\infty}$ functions on $M$.
We denote by $\F_M$ the sheaf of mero functions
on $M$, and 
 by ${\mathfrak f}_M$ the sheaf of $\F_M$-modules. 
Denote by ${\mathcal G}({\bf z})$ the sheaf of Lie-algebra $\mathfrak g$ $n$-parameter ${\bf z}$ formal series. 
 It can be represented via by ${\mathcal F}_M$-modules for the sheaf of mero 
functions. 
Note that the sheaf $\mathcal G({\bf z})$ naturally induces the sheaf ${\mathcal E}_M$. 
 Let ${\mathfrak F}$ a sheaf of ${\mathcal O}_X$-modules which are Lie algebras.
Let
$(\mathfrak{F},\bar{\partial})$ a sheaf of differential graded Lie algebras which are ${\mathcal E}_M$-modules.  
We denote by $\Ga(\mathfrak{F})$, $\Ga(X,\mathfrak{F})$ or 
$\mathfrak{F}(X)$ the differential graded Lie algebras of global sections of the sheaf 
$\mathfrak{F}$.
As in \cite{wag}, 
 we can associate to $\mathfrak{F}$ resp. to
$(\mathfrak{F},\bar{\partial})$ sheaves of differential graded coalgebras
$C_*(\mathfrak{F})$, $C_{*,dg}(\mathfrak{F})$, $H_*(\mathfrak{F})$ and
$H_{*,dg}(\mathfrak{F})$ where the last two carry the trivial
differential.
 In the same way, we have sheaves of differential graded
algebras $C^*_{cont}(\mathfrak{F})$, $C_{dg}^*(\mathfrak{F})$,
$H^*_{cont}(\mathfrak{F})$ and $H_{dg}^*(\mathfrak{F})$. 
Furthermore, we have differential graded coalgebras $C_*(\Ga(\mathfrak{g}))$, 
$C_{*,dg}(\Ga(\mathfrak{F}))$, $H_*(\Ga(\mathfrak{F}))$ and
$H_{*,dg}(\Ga(\mathfrak{F}))$, and the corresponding algebras. 
For a sheaf $\A$ 
on   $M$, the sheaf cohomology groups
$H^{i}(\A, M)$  
for integers $i$
are defined as the right derived functors of the functor of global sections, 
$\A \mapsto \A(X)$.  
As a result, 
${\displaystyle H^{i}(\A, M)}$ is zero for 
${\displaystyle i<0}$,  
and 
${\displaystyle H^{0}(X, \A)}$ can be identified with 
${\displaystyle \A}(X)$. 
%
For any short exact sequence of sheaves 
${\displaystyle 0\to {\mathcal {A}}\to {\mathcal {B}}\to {\mathcal {C}}\to 0}$,
 there is a long exact sequence of
 cohomology groups: 
\[
{\displaystyle 0\to H^{0}(X,{\mathcal {A}})\to H^{0}(X,{\mathcal {B}})\to H^{0}
(X,{\mathcal {C}})\to H^{1}(X,{\mathcal {A}})\to \cdots .}
\]
\subsection{Thickened nerve of the covering}  
Here we have to provide some further topological definitions.  
%
The nerve of an open covering is a construction of an abstract simplicial complex
 from an open covering of $M$. 
\begin{definition}
Let 
$I$ be an index set and $\U$ be a family of open subsets 
 $U_{i}$ of $M$ indexed by $i\in I$.  
The nerve of $\U$ is a set of finite subsets of the index-set $I$.  
It contains all finite subsets $J\subseteq I$ such that the intersection of the 
 $U_{i}$ whose subindices are in $J$ is non-empty, i.e., 
$N(\U)= \left\{ J\subseteq I:\bigcap _{j\in J} U_{j} \neq \varnothing \right\}$, where $J$ 
is a finite set.    
\end{definition}
\begin{definition}
If $J\in N(\U)$, then any subset of $J$   
is also in $N(\U)$, 
 making $N(\U)$ an abstract simplicial complex,   
often called the nerve complex of $C$.
\end{definition}
Next we recall the notation of a manifold made out of simplices: 
A simplicial manifold is a simplicial complex for which the geometric realization 
is homeomorphic to a topological manifold. 
This is essentially the concept of a triangulation in topology.  
This can mean simply that a neighborhood of each vertex 
(i.e., the set of simplices that contain that point as a vertex) is homeomorphic to a $n$-dimensional ball.
%
For our further purposes we will need the following proposition from \cite{BS}: 
\begin{proposition}
(Proposition (5.9)  of \cite{BS}).
 If $C \to C'$ is a morphism of simplicial cochain complexes such that
$C_p \to C_p'$ is a cohomology equivalence for each $p$, then $|C| \to  |C'|$ is a cohomology equivalence. 
\end{proposition}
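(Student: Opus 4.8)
The statement to prove is Proposition (5.9) of \cite{BS}: if $C \to C'$ is a morphism of simplicial cochain complexes such that $C_p \to C_p'$ is a cohomology equivalence for each $p$, then $|C| \to |C'|$ is a cohomology equivalence. This is a classical spectral sequence argument, so the plan is to reduce the global statement to the pointwise (simplicial-degree-wise) hypothesis via the standard spectral sequence of a simplicial (cosimplicial) object.

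\medskip

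\textbf{Approach.} The plan is to use the filtration on the total complex $|C|$ associated to the simplicial degree $p$. First I would recall that $|C|$ is the total complex of a double complex whose $(p,q)$ term is $C_p^q$, the $q$-th cochain group in simplicial degree $p$, with one differential the internal differential of the cochain complex $C_p$ and the other the alternating sum of the coface maps. Filtering by $p$ produces a first-quadrant spectral sequence with
\[
E_1^{p,q} = H^q(C_p)
\]
converging to $H^{p+q}(|C|)$, and similarly an $E_1'^{p,q} = H^q(C_p')$ converging to $H^{p+q}(|C'|)$. The morphism $C \to C'$ is a morphism of double complexes, hence induces a morphism of spectral sequences compatible with the filtrations.

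\medskip

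\textbf{Key steps, in order.} Step one: set up the double complex and its filtration by simplicial degree, and identify the $E_1$ page as the cohomology of the columns, i.e. $E_1^{p,q} = H^q(C_p)$. Step two: invoke the hypothesis that $C_p \to C_p'$ is a cohomology equivalence for every $p$; this says precisely that the map on $E_1$ pages is an isomorphism in every bidegree. Step three: apply the comparison theorem for spectral sequences (Zeeman/Eilenberg–Moore style): a morphism of spectral sequences that is an isomorphism on some page $E_r$ is an isomorphism on all later pages $E_s$ for $s \ge r$, and hence — since both spectral sequences converge (first quadrant, so convergence is automatic and the filtrations are finite in each total degree) — induces an isomorphism on the associated graded of the abutment. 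Step four: conclude from the isomorphism on associated graded, together with compatibility of the filtrations and the five lemma (or a straightforward induction up the finite filtration in each total degree), that $H^n(|C|) \to H^n(|C'|)$ is an isomorphism for all $n$, which is the assertion that $|C| \to |C'|$ is a cohomology equivalence.

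\medskip

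\textbf{Main obstacle.} The only genuine subtlety — and the point I would be most careful about — is convergence and the passage from ``isomorphism on $E_1$'' to ``isomorphism on cohomology of the totalization.'' In the bounded (first-quadrant) setting this is the standard Eilenberg–Moore comparison theorem and is routine; but if the simplicial object is not bounded, or if one works with cochains where the filtration is not finite in each total degree, one must check a convergence hypothesis (e.g. that the filtration is exhaustive and Hausdorff, or appeal to a conditional convergence argument) before the comparison theorem applies. For the use made of this proposition in the present paper the relevant nerves and coverings give first-quadrant double complexes, so finiteness of the filtration in each total degree holds and no extra hypothesis is needed; I would simply note this and quote \cite{BS} for the bookkeeping details rather than reproving the comparison theorem.
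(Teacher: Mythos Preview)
Your spectral sequence argument is correct and is exactly the standard proof of this result. Note, however, that the paper does not supply its own proof of this proposition: it is simply quoted verbatim as Proposition (5.9) of \cite{BS} and used as a black box later in the argument, so there is nothing to compare against beyond observing that your filtration-by-simplicial-degree and comparison-theorem approach is precisely the method Bott and Segal use in the cited reference.
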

\subsection{Definition of the cosimplicial sheaf complex} 
In this subsection, following the general ideas of
\cite{CM, wag} we formulate definition of a cohomology $H^*(\U, \A)$ of  
a cosimplicial sheaf $\A$ complex $C_*(\U, \A)$ defined  
 on a system $\U$ of domains on smooth complex manifold $M$.   
\begin{definition}
Let $\Gamma (\A, U)$ be a local section defined on $U$ for a coherent sheaf $\A$. 
Then one defines a chain-cochain complex 
\begin{equation}
\label{sheaf-complex} 
 {C}_{k}(\U, \A) =  
 \Gamma \left(\A, \; \bigcap_    
 {
i_1 \le \ldots \le i_k, k \ge 0,  \atop  U_{0}\stackrel{h_1}{\rmap} \ldots \stackrel{h_k}{\rmap} U_k
} U_{ i_k}\right). 
\end{equation}
where the intersection is over all $k$-strings of $h$-
embeddings between opens $U_i\in \U_{i+1}$,  
and the boundary operator $\delta = \sum(-1)^i\delta_i$ is given by the standard formula
\begin{equation}\label{deltas} \delta_{i}\Gamma(h_1, \ldots , h_{k+1})= \left\{ \begin{array}{lll}
                                      h_{1}^{*}\Gamma(h_2, \ldots , h_{k+1}), \ \ \mbox{if $i=0$}\\ 
                                      \Gamma(h_1, \ldots, h_{i+1}h_{i}, \ldots, h_{k+1}), \ \ \mbox{if $0<i< k+1$}\\
                                      \Gamma(h_1, \ldots, h_k), \ \ \mbox{if $i= k+1$}
                        \end{array}
                \right.
\end{equation}
originating from application of the Chevalley-Eilenberg \cite{CE} functor. 
\end{definition}
Note that according to \cite{CM}, one has 
\begin{lemma}
\label{lemmaCM}
The complex \eqref{sheaf-complex} is a graded differential algebra with the usual product 
\begin{equation}
\label{multiplication}
(\Gamma_1\cdot\Gamma_2)(h_1, \ldots , h_{k+k'})= (-1)^{kk'}\Gamma_1(h_1, \ldots , h_{k})\; (h_{1}^{*}
 \ldots h_{k}^{*}) \;\Gamma_2(h_{k+1}, \ldots h_{k+k'}), 
 \end{equation}
for $\Gamma_1\in C_{k}(G, \U)$ and $\Gamma_2\in C_{k'}(G, \U)$.
\end{lemma}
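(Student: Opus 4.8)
\textbf{Proof plan for Lemma \ref{lemmaCM}.}
The plan is to verify directly that the formula \eqref{multiplication} defines a well-defined, associative, graded product on the cosimplicial complex $C_*(\U,\A)$ and that the coboundary $\delta$ of \eqref{deltas} is a graded derivation for it. First I would check well-definedness: given $\Gamma_1 \in C_k(G,\U)$ and $\Gamma_2 \in C_{k'}(G,\U)$, each is a family of local sections indexed by $k$- (resp. $k'$-) strings of $h$-embeddings, and the product is indexed by the concatenated $(k+k')$-string $h_1,\ldots,h_{k+k'}$. The point is that $\Gamma_1(h_1,\ldots,h_k)$ is a section over the appropriate intersection of the $U_i$ for the initial segment, while $\Gamma_2(h_{k+1},\ldots,h_{k+k'})$ lives over the intersection for the terminal segment; pulling the latter back along $h_1^*\cdots h_k^*$ transports it to the domain where $\Gamma_1$ lives, so the pointwise product of sections makes sense there. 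Since the restriction maps of a coherent sheaf are ring (and module) homomorphisms and respect the $\A$-module structure, the result is again a local section over the correct intersection, hence an element of $C_{k+k'}(G,\U)$. The grading is by the string-length $k$, and \eqref{multiplication} visibly sends degree $k$ and $k'$ to degree $k+k'$, with the Koszul sign $(-1)^{kk'}$.

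Next I would establish associativity of the product. Expanding $(\Gamma_1\cdot\Gamma_2)\cdot\Gamma_3$ and $\Gamma_1\cdot(\Gamma_2\cdot\Gamma_3)$ using \eqref{multiplication}, both sides evaluate on a concatenated $(k+k'+k'')$-string to $\Gamma_1(h_1,\ldots,h_k)$ times $h_1^*\cdots h_k^*\,\Gamma_2(h_{k+1},\ldots)$ times $h_1^*\cdots h_{k+k'}^*\,\Gamma_3(h_{k+k'+1},\ldots)$, once the functoriality of the pullbacks $h^*$ (so that $(h_1^*\cdots h_k^*)\circ(h_{k+1}^*\cdots h_{k+k'}^*)$ applied to $\Gamma_3$'s component agrees with $h_1^*\cdots h_{k+k'}^*$) is used; the sign check is $(-1)^{(k+k')k''}(-1)^{kk'} = (-1)^{k(k'+k'')}(-1)^{k'k''}$, which holds. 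Then I would verify the graded Leibniz rule $\delta(\Gamma_1\cdot\Gamma_2) = (\delta\Gamma_1)\cdot\Gamma_2 + (-1)^k\,\Gamma_1\cdot(\delta\Gamma_2)$ by splitting the alternating sum $\delta = \sum(-1)^i\delta_i$ of \eqref{deltas} according to where the index $i$ falls: terms with $i\le k$ act only on the $\Gamma_1$-part and reassemble into $(\delta\Gamma_1)\cdot\Gamma_2$, terms with $i\ge k+1$ act only on the $\Gamma_2$-part and reassemble into $\pm\Gamma_1\cdot(\delta\Gamma_2)$, and the boundary cases $i=k$ and $i=k+1$, which involve the splice point $h_{k+1}h_k$, cancel (the $h^*$-prefactor in \eqref{multiplication} is precisely what makes the would-be extra term $h_1^*\cdots h_k^*$ applied to $h_{k+1}^*\Gamma_2(\ldots)$ coincide with the composite pullback, so these two contributions are equal with opposite sign). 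This is exactly the Chevalley–Eilenberg functoriality referenced after \eqref{deltas}, so the computation follows the same bookkeeping as in \cite{CM}.

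The main obstacle I anticipate is purely organizational: keeping the string-concatenation indices and the pullback prefactors $h_1^*\cdots h_j^*$ aligned across the three terms in the associativity check and, especially, across the boundary-index cancellation $i=k$ versus $i=k+1$ in the Leibniz rule, where one must use both the cosimplicial identity $\delta_k$ on the left block and $\delta_0 = h_{k+1}^*$ on the right block. There is no genuine analytic difficulty here — coherence of $\A$, functoriality of pullbacks, and the already-established fact that restrictions of mero functions are mero functions (definition \ref{defcomp}) do all the work — so once the sign conventions are fixed the verification is a finite diagram chase. I would therefore present the well-definedness and the three identities (associativity, unitality if a unit exists, graded Leibniz) as successive short paragraphs, citing \cite{CM, CE} for the template and noting only the places where the coherent-sheaf module structure replaces the ring structure of the classical Chevalley–Eilenberg setting.
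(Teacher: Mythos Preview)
The paper does not supply its own proof of this lemma: it is introduced with the phrase ``Note that according to \cite{CM}, one has'' and the statement is simply quoted as a known fact from Crainic--Moerdijk, with no argument given. Your proposal therefore goes beyond what the paper does, by actually writing out the standard verification that the Alexander--Whitney/\v{C}ech cup product \eqref{multiplication} is associative and that the cosimplicial coboundary \eqref{deltas} is a graded derivation.

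Your outline is the correct and standard one, and is exactly the computation carried out in \cite{CM} (and in the classical \v{C}ech setting). One small point of care: in the Leibniz-rule bookkeeping you describe the splice-point contributions at $i=k$ and $i=k+1$ as ``cancelling,'' but in the usual presentation the single term $\delta_{k}$ applied to the product is what appears \emph{once} on each side --- as the last face of $\delta\Gamma_1$ in $(\delta\Gamma_1)\cdot\Gamma_2$ and as the $\delta_0$-pullback of $\Gamma_2$ in $\Gamma_1\cdot(\delta\Gamma_2)$ --- rather than two separate terms that annihilate. The identity you need is that $\delta_k$ on the concatenated string agrees with the composite $h_{k+1}h_k$ action, which the prefactor $h_1^*\cdots h_k^*$ in \eqref{multiplication} makes match on both sides. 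Once that bookkeeping is set straight the rest of your plan goes through verbatim; there is no analytic content, only the functoriality of restriction maps for the coherent sheaf $\A$, as you note.
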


\begin{definition}
For $\A=\F_M(\Theta(ln, kn), \U)$ beeing local $\F_M$-section defined on $U$, 
we introduce the cosimplicial sheaf complex of mero functions by \eqref{sheaf-complex}. 
\end{definition}
\subsection{Definition of the cosimplicial sheaf cohomology $H^l_{k, \; cos}(\mathcal G(\bf z), \U)$  
for the sheaf of mero functions on $M$} 
Next we define
\begin{definition}
$H_*^{*}( \G({\bf z)}, \U)$ as the cohomology of the double complex ${C}_k( \es^{l}, \U)$ given by 
\eqref{sheaf-complex} 
, where 
\[
0\rmap \A\rmap \es^0\rmap\ldots \rmap \es^d\rmap...,  
\]
 is a bounded resolution by $\U$-acyclic sheaves. 
\end{definition}
By the usual arguments \cite{CM}, such resolutions always exist, and the definition does not depend on
 the choice of
the resolution.
Let us now recall \cite{wag} the universal notion of Stein spaces which we will use 
in further discussions. 
\begin{definition}
An open set $U\subset M$ of a complex manifold $M$ is called a Stein
open set, if 
the coherent
sheaf cohomology vanishes on $U$, i.e.,   
$H^i_k( \A, U) = 0$,  $i \ge 1$, $k \ge 0$,  
and for all coherent sheaves $\A$ on $M$. 
\end{definition}
 In the language of \cite{CM} the Stein spaces are called $\U$-acyclic. 

\subsection{Graded differential 
cohomology $H^*_{*, dg} (\Gamma(\mathcal G({\bf z})), M)$  
of global sections of the sheaf $\mathcal G({\bf z})$ } 
In this subsection we recall and make applications of certain facts \cite{wag} 
on construction of the graded differential cohomology $H^*_{*, dg} (\Gamma(\mathcal G({\bf z})), M)$ 
of global sections of the sheaf $\mathcal G({\bf z})$ of an infinite-dimensional Lie algebra $\mathfrak g$ 
formal series.  
 This 
cohomology is calculated  
by associating to $\mathfrak g$ a graded differential 
algebra $C^*(\mathfrak g) = (Hom(\Lambda^*(\mathfrak g),\C), d)$, 
the 
cohomological Chevalley-Eilenberg complex \cite{CE} described below. 
\begin{definition}
Let $\mathfrak a$ be a 
Lie algebra over $\C$. 
 The Chevalley--Eilenberg chain complex is a  
projective resolution 
$W^*(\mathfrak a) \to \C$  
of the trivial $\mathfrak a$-module $\C$ in the abelian category of $\mathfrak a$-modules   
(what is the same as $U({\mathfrak a})$-modules, where $U({\mathfrak a})$ is the universal   
enveloping algebra of $\mathfrak a$).  
Graded components of the underlying $\C$-module  
of this resolution is given by 
$W_p(\mathfrak a)= U(\mathfrak a) \otimes_k \Lambda^p \mathfrak a$,  
and it has the obvious $U({\mathfrak a})$-module structure by multiplication in the first tensor factor, 
because $\Lambda^p \mathfrak a$ is free as a $\C$-module.   
For $u \in U({\mathfrak a})$, and ${\bf v}_p \in {\mathfrak a}^{\otimes p}$,  the differential is given by
\begin{eqnarray}
d\left(u \otimes {\bf \bigwedge v}_p \right) = 
 \sum_{i = 1}^p (-1)^{i+1} u\; v_i \otimes  {\bf \bigwedge v}_{(p, \widehat {i}) }
+\sum_{i < j} (-1)^{i+j} u\otimes [v_i, v_j] \wedge {\bf \bigwedge v}_{(p, \widehat {i}, \widehat {j})},  
\end{eqnarray}
%
Then, let  
$g = (\bigoplus_{i=0}^{n}g^i,  {\partial})$,  
 be a cohomological graded differential Lie algebra (which we will denote dgla in notations). 
As it was mentioned in \cite{wag}, 
 there exist two functors,  
$C_{*,dg}$ and $C^*_{dg}$, 
associating to $(g, {\partial})$ graded differential coalgebras 
$C_{*,dg}(g)$ and $C^*_{dg}(g)$.  
$C_{*,dg}(g)$ is called the Quillen functor, \cite{Qui}.   
It was explicitly constructed in \cite{HinSch}.    
The cohomology version was 
used in 
\cite{Hae} and  
\cite{SchSta}. 
Explicitly, it is given by 
\begin{displaymath}
C_{k,dg}(g) = \bigoplus_{k=p+q}C_{dg}^p(g)^q = 
\bigoplus_{k=p+q}S^{-p}(g_{(q+1)}), 
\end{displaymath}
 as graded vector spaces. 
Here $S^{-p}(g_{(q+1)})$ 
 is the graded symmetric algebra
$S^*$ on the shifted by one graded vector space $g_{(q+1)}$.  
The differential on $C_{*, dg}(g)$ is the
direct sum of the graded homological Chevalley-Eilenberg \cite{CE} differential
in the tensor direction (with degree reversed in order to have a
cohomological differential) and the differential induced on
$S^*(g_{(q+1)})^*$ by $ 
{\partial}$. 
%
 
We denote by $\Ga(\mathfrak{F}, M)$  
 the graded differential Lie algebra of global sections of the sheaf $\mathfrak{F}$ on $M$.  
Consider the sheaf of $G$-valued series for $\mathfrak g$. It 
 constitutes a sheaf of Lie algebras. 
According to \cite{HinSch} (proposition .), for any sheaf of Lie algebras
$\mathfrak{h}$ there is another sheaf of differential
graded Lie algebras constituing a resolution of $\mathfrak{h}$.
 It is the sheaf of cosimplicial Lie algebras given by taking $\mathfrak{h}$ on
the ${\rm \check C}$ech complex \eqref{sheaf-complex} associated to a covering ${\mathcal U}$ by Stein open
sets, suitably normalized by the Thom-Sullivan functor, see \cite{HinSch, wag}.
One can see that such sheaf of differential graded Lie algebras is given by 
the sheaf $\mathcal G({\bf z})$ taken our complex \eqref{sheaf-complex} 
on $\U$ given by Stein open sets. 

\subsection{Cosimplicial cohomology 
$H_{*\; cos}^*(\check C(G, 
 \U))$} 
Developing ideas of \cite{wag, CE, S}, we 
 give the definition of the cohomology $H_{*\; cos}^*(\check C(G, \U))$
of cosimplicial Lie algebra of the complex 
 of mero functions for  
an infinite-dimensional Lie algebra $\mathfrak g$ defined on 
 a specific covering. 
\begin{definition}
 The cosimplicial cohomology $H_{*\; cos}^*(\check C(G, \U))$ of the complex    
$C^l_k(G, \U)$, $l$, $k \ge 0$, 
with $C^l_k(G, \U)$ given by the complex 
of 
mero functonsi  
for the Lie algebra $\mathfrak g$ of 
 $G$-valued formal series defined on a covering $\U$       
 %
is the cohomology of 
the realization of simplicial cochain complex 
obtained from applying the continuous (chain-cochain) 
 Chevalley--Eilenberg complex as a functor $C^*_{*, cont}$ to the 
cosimplicial Lie algebra $\check C(G, \U)$.  
\end{definition}

\subsection{Computation of graded differential sheaf cohomology
 via cohomology of cosimplicial Lie algebra of mero functions cohomology}
The main idea of this subsection is that we are able to compute 
 the graded differential algebra cohomology defined for the sheaf of $G$-valued series 
for an infinite-dimensional Lie algebra $\mathcal G$  
on an $n$-dimensional complex manifold $M$ 
 via  the mero function cosimplicial cohomology for $\mathcal G$ considered   
on special type of open domains on $M$. 

We then obtain the main result of this section 
\begin{proposition}
On a complex manifold $M$ of dimension $n$, one has 
\begin{displaymath}
 H^*_{*\; dg} \left( \Gamma(\mathcal G({\bf z}),  M)  \right)  \cong H^*_{*\; cos} \left( {\check C} (G,  
{\mathcal U})\right), 
\end{displaymath}
for any covering of $M$ by Stein open sets ${\mathcal U}$ (with respect to the cosimplicial sheaf cohomology of
coherent sheaves).  
\end{proposition}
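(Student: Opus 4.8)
The plan is to adapt the proof of Theorem~4 of \cite{wag}, where the corresponding statement was established with the Kodaira--Spencer sheaf of holomorphic vector fields playing the role of the sheaf $\mathcal G({\bf z})$ of $G$-valued formal series for $\mathfrak g$. The first step is to recognize that, by the construction recalled in the preceding subsections, the sheaf of differential graded Lie algebras obtained by evaluating $\mathcal G({\bf z})$ on the complex \eqref{sheaf-complex} over a covering $\mathcal U$ by Stein open sets, normalized by the Thom-Sullivan functor, is precisely a resolution of the sheaf $\mathcal G({\bf z})$ in the sense of \cite{HinSch}. The Stein hypothesis is essential here: by definition of a Stein open set $H^i_k(\A, U)=0$ for $i\ge 1$, $k\ge 0$ and all coherent $\A$, so in the language of \cite{CM} the members of $\mathcal U$ are $\mathcal U$-acyclic, whence the complex \eqref{sheaf-complex} computes the coherent sheaf cohomology. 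That the evaluation on \eqref{sheaf-complex} indeed produces a sheaf of dgla's, and that the coboundary squares to zero, is guaranteed by Lemma~\ref{lemmaCM} together with Proposition~\ref{cochainprop}, whose chain identity $D^{l+1}_{k-1}\circ D^{l}_{k}=0$ makes $C^l_k(G,\U)$ into a genuine double complex.

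Next I would apply the Quillen functor $C_{*,dg}$, equivalently its cohomological counterpart $C^*_{*,dg}$ realized by the continuous Chevalley--Eilenberg functor $C^*_{*,cont}$, to this resolution. The point at which the argument acquires content is the claim that these functors send cohomology equivalences of sheaves of differential graded Lie algebras to cohomology equivalences of the associated sheaves of differential graded (co)algebras; this is the compatibility statement recalled from \cite{HinSch, wag} (and is the abstract reason behind Proposition (5.9) of \cite{BS}). Applied to the resolution of the previous paragraph, it yields a cohomology equivalence relating $C^*_{dg}$ of the global sections $\Gamma(\mathcal G({\bf z}), M)$, whose cohomology is by definition $H^*_{*,dg}(\Gamma(\mathcal G({\bf z}), M))$, with $C^*_{*,cont}$ applied to the cosimplicial Lie algebra $\check C(G, \mathcal U)$, whose realization's cohomology is by definition $H^*_{*,cos}(\check C(G, \mathcal U))$.

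The remaining step is a double complex / spectral sequence comparison. One forms the total complex with one direction the \v{C}ech--simplicial direction indexed by the intersection index $k$ in \eqref{sheaf-complex}, the other the homological Chevalley--Eilenberg direction indexed by $l$ via \eqref{hatdelta}, together with the internal $\bar{\partial}$ and $\partial$ differentials coming from the $\mathcal E_M$-module structure. Filtering first by the resolution degree of a chosen bounded $\mathcal U$-acyclic resolution $0\rmap\A\rmap\es^0\rmap\es^1\rmap\cdots$, the $\mathcal U$-acyclicity of the Stein cover makes the associated page collapse onto $\Gamma$ of the resolved sheaf, delivering $H^*_{*,dg}(\Gamma(\mathcal G({\bf z}), M))$; filtering the other way produces the cosimplicial cohomology $H^*_{*,cos}(\check C(G, \mathcal U))$. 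Since by the standard arguments of \cite{CM} the answer is independent of the chosen $\mathcal U$-acyclic resolution, both identifications target the same object, which is the asserted isomorphism.

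The hardest point will be the convergence of these spectral sequences in the presence of the infinite products defining the algebraic completion $G=\prod_{n\in\C}W_{(n)}=(W')^*$: one must verify that the totalization filtration is exhaustive and complete and that passing from finite to infinite sums commutes with the differentials \eqref{hatdelta}. This is exactly where the absolute-convergence axioms of Definition~\ref{defcomp}, together with the compatibility of the $K_G$-grading with the descending filtration by grading subspaces described in Section~2.1, do the work: they already guarantee (Proposition~\ref{cochainprop}) that \eqref{hatdelta} lands in $C^{l+1}_{k-1}$ and that the relevant filtrations are well behaved, so the spectral sequences are regular and convergent. A secondary point requiring care is that the Quillen functor is known to behave well only on suitably bounded (or appropriately filtered) dgla's, so one must check that the complex grading on $G$ and the boundedness of the resolution $\es^\bullet$ place us in that situation; this follows from the hypotheses already in force.
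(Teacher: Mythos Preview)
Your proposal follows essentially the same overall strategy as the paper: both invoke the Hinich--Schechtman resolution of the sheaf $\mathcal G({\bf z})$ by the Thom--Sullivan normalized cosimplicial Lie algebra over a Stein cover, apply the Quillen / continuous Chevalley--Eilenberg functor, and appeal to Proposition~5.9 of \cite{BS} for the cohomology equivalence of realizations. So at the architectural level you and the paper agree.

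Where you diverge is in the mechanism used to identify each side with the correct cohomology. The paper works explicitly with the \emph{thickened nerve} $N_{*}$ of the covering: it writes down an inclusion $f:\check C(G,N_{M,q})\hookrightarrow\mathfrak F(G,N_{M,q})$ of graded differential algebras on $N_{M,q}$, applies (a modification of) the Quillen functor to produce a morphism $\widetilde f$ of simplicial cochain complexes, checks the hypothesis of \cite[Prop.~5.9]{BS} via the Stein isomorphism together with the K\"unneth theorem, and then invokes \cite[Prop.~6.2]{BS} with partitions of unity to identify the realization on the left with $H^*_{*,dg}(\Gamma(\mathcal G({\bf z}),M))$. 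You instead run a double-complex/spectral-sequence comparison, filtering by resolution degree versus Chevalley--Eilenberg degree and using $\mathcal U$-acyclicity to force collapse. Your route is more algebraic and makes the role of the Stein hypothesis transparent as a degeneration statement; the paper's route stays closer to the topological machinery of \cite{BS} (nerve, realizations, partitions of unity) and to the literal shape of \cite[Thm.~4]{wag}. Both lead to the same conclusion, and your added discussion of convergence in the presence of the completion $G=\prod_n W_{(n)}$ is a point the paper does not address explicitly.
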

\begin{proof}
The idea of the proof is quite close to remarks to the proof of Theorem 4 of \cite{wag}. 
Here we give an explicit realization of those ideas. 
We consider the sheaf ${\mathfrak F}$ of global sections of $G$-valued formal series associated to an 
infinite-dimensional Lie algebra 
${\mathfrak g}$, and the sheaf $\mathcal G({\bf z})$ of $G$-valued mero functions. 
According to \cite{HinSch}, for any sheaf of Lie algebras $\mathfrak h$ there is another sheaf of differential
graded Lie algebras constituting a resolution of $\mathfrak h$.
 It is the sheaf of cosimplicial Lie
algebras given by taking ${\mathfrak h}$ on the ${\rm {\check C}}$ech complex associated to a covering $\U$
by Stein
open sets, suitably normalized by the Thom-Sullivan functor \cite{HinSch}. 
For ${\mathfrak F}$ the graded differential algebra $\Gamma(\mathfrak F, M)$ of global sections of ${\mathfrak F}$  
is obtained by application of the Chevalley-Eilenberg 
complex 
 on a Stein cover of $M$.
On the other hand the graded differential algebra $\check C(G, \U)$
is given through Chevalley-Eilenberg simplicial chain complex 
 on a Stein open sets
with multiplication \eqref{multiplication}. 
Our aim now is to construct an explicit isomorphisms of these two complexes. 

We then find (following the lines of \cite{HinSch, wag})  
a relation between cohomology of   
the sheaf of graded differential algebras associated to $\mathfrak g$   
and 
 the sheaf of graded differential algebra ${\check C}(G, \U)$ of mero functions for a Lie algebra $\mathfrak g$.  
Let, as in \cite{wag}, denote by  
 $N_{*}$ the thickened nerve of the covering
${\U}$, i.e., the simplicial complex manifold associated to the
covering ${\U}$.
On $M$, there exists an inclusion
\begin{equation}
\label{inclusion}
f : 
 \check C(G, N_{M,q}) \hookrightarrow {\mathfrak F}(G, N_{M,q}), 
\end{equation}
of graded differential algebras $\check C(G, N_{M,q})$ and ${\mathfrak F}(G, N_{M,q})$ on $N_{M,q}$. 
By applying the modification of the Quillen functor \cite{wag}, 
this inclusion induces 
\begin{displaymath}
\widetilde{f} : C^*_{dg}({\mathfrak F}, N_{*}) \to C^*_{cont}(\mathcal G({\bf z}),  N_{*}),  
\end{displaymath}
a morphism of simplicial cochain complexes.  
 By Proposition 5.9 in \cite{BS}, the morphism $\widetilde{f}$
induces a cohomology equivalence between the realizations of the two simplicial
cochain complexes $C_k^l(\mathcal G(\bf z), M)$ and $C^l_k(G, \U)$.  
 The conditions of the lemma are fulfilled because
of the isomorphism of the cohomologies on a Stein open set of
the covering and the K\"unneth theorem \cite{wag}.  
Using  Proposition 
6.2 of 
\cite{BS}, and involving partitions of unity, one shows that the cohomology
of the realization of the simplicial cochain complex on the left hand
side gives the graded differential cohomology of $\Ga(X,\mathfrak{g})$.
\end{proof}
\section{Relation of cosimplicial and singular cohomology}
%
Gelfand and Fuks \cite{Fuk}  
calculated cohomology of the Lie algebra of formal vector fields in $n$ complex variables 
$W_n$.   
In particular, they proved \cite{Fuk, FF1988}
\begin{theorem} 
There exists a manifold $X(n)$ such that the continuous cohomology of $W_n$ is equivalent to singular 
cohomology of $X(n)$ 
\begin{displaymath}
H^*_{cont}(W_n) \cong H^*_{sing}(X(n)).
\end{displaymath}
\end{theorem}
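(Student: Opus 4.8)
The plan is to follow the original Gelfand--Fuks strategy \cite{Fuk, FF1988, GF}: reduce the continuous cohomology of the topological Lie algebra $W_n$ to the cohomology of a finite-dimensional commutative differential graded algebra, and then realize that algebra as the cochain model of a compact manifold $X(n)$. The whole argument is organized around the two finite-dimensional invariants hidden inside the infinite-dimensional object: the reductive subalgebra of linear vector fields, and the Chern data surviving on an $n$-dimensional formal disk.

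First I would introduce the standard grading $W_n = \prod_{k \ge -1} (W_n)_k$ by the polynomial degree of the coefficients of the formal vector fields, under which the degree-zero piece $(W_n)_0$ is the finite-dimensional reductive subalgebra $\mathfrak{gl}_n(\C)$ of linear vector fields, acting on all of $W_n$. Exploiting reductivity, I would restrict to $\mathfrak{gl}_n$-invariant continuous cochains, which compute the same cohomology, and use the classical factorization $H^*_{cont}(W_n) \cong H^*_{cont}(W_n,\mathfrak{gl}_n) \otimes H^*(\mathfrak{gl}_n)$, where $H^*(\mathfrak{gl}_n) \cong H^*(U(n))$ is the exterior algebra on the transgressive generators of the maximal compact subgroup. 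This reduces the problem to the relative continuous cohomology $H^*_{cont}(W_n,\mathfrak{gl}_n)$.

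Next I would compute $H^*_{cont}(W_n,\mathfrak{gl}_n)$ by comparing the relative complex with the truncated Weil algebra of $\mathfrak{gl}_n$. Continuity of the cochains forces everything to be built from $\mathfrak{gl}_n$-invariant polynomial data on the formal disk, and invariant theory in $n$ variables supplies only the Chern generators $c_1,\dots,c_n$ in cohomological degrees $2,4,\dots,2n$; no invariant of cohomological degree exceeding $2n$ survives. I would establish a quasi-isomorphism between the relative complex and the Weil algebra $S(\mathfrak{gl}_n^*)\otimes\Lambda(\mathfrak{gl}_n^*)$ truncated in the symmetric direction above total degree $2n$. This identification --- in particular the claim that invariant cochains compute the entire cohomology and that the relevant spectral sequence converges over the infinite product topology on $W_n$ --- is the main obstacle, and it is exactly the step where the continuity hypothesis on cochains is indispensable.

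Finally, having produced a finite-dimensional commutative DGA whose cohomology is $H^*_{cont}(W_n)$, I would realize it geometrically. The truncated Weil algebra is a Sullivan model for the total space $X(n)$ of the principal bundle $U(n)\hookrightarrow X(n)\to (BU(n))_{2n}$ over the $2n$-skeleton of $BU(n)$, which, upon passing to a finite-dimensional approximation $BU(n)\simeq \mathrm{Gr}_n(\C^N)$ for large $N$, may be chosen to be a compact manifold. Because this model is formal and finite-dimensional, Sullivan's correspondence identifies its cohomology with $H^*_{sing}(X(n))$, and matching the reductive factorization above against the Serre spectral sequence of the fibration $U(n)\to X(n)\to (BU(n))_{2n}$ yields $H^*_{cont}(W_n)\cong H^*_{sing}(X(n))$, as claimed.
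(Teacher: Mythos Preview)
The paper does not actually prove this theorem: it is stated as a classical result of Gelfand and Fuks and is simply cited with references \cite{Fuk, FF1988}. There is therefore no ``paper's own proof'' to compare against. Your sketch is a faithful outline of the standard Gelfand--Fuks argument found in those references, and it matches the brief description the paper itself gives later when discussing its analogue (Proposition~\ref{guga}), namely ``an isomorphism of the Hochschild--Serre spectral sequence for the subalgebra $gl(n)$ with the Leray spectral sequence of the restriction to the $2n$-skeleton of the universal $U(n)$ principal bundle.'' Your reduction via $\mathfrak{gl}_n$-relative cohomology, identification with the truncated Weil algebra, and geometric realization as $X(n)\to (BU(n))_{2n}$ are exactly this programme; the paper's only additional remark is the concrete description of $X(n)$ as an open neighborhood of the preimage of the $2n$-skeleton under the map to the infinite Grassmannian $G(\infty,n)$, which is equivalent to your formulation.
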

In \cite{BS} they  
showed that for $\R^n$ or, more generally, for a starshaped open set $U$ of an 
$n$-dimensional manifold $M$, the Lie algebra of
$C^{\infty}$-vector fields $Vect(U)$ has the same cohomology as $W_n$.
%
In \cite{wag} it was proven that 
 the same is true for the Lie algebra of holomorphic vector fields on a
disk of radius $R$ in $\C^n$.  
In this paper, we consider cohomology of mero 
functions provided by bilinear pairings for an 
 arbitrary $n$-formal parameter Lie algebra $G$-valued series localized on a 
complex $n$-dimensional manifold $M$. 
For purposes of determining the cosimplicial cohomology, we use the machinery of 
chequered necklaces \cite{MT} associated with $G$-valued series. 
\subsection{A-matrix}
In this subsection we
discuss a number of elliptic 
functions that we
will need. 
The Weierstrass elliptic function with periods
$\sigma$, $\varsigma \in \mathbb{C}^{\ast }$ is defined by 
\begin{equation}
\wp (z,\sigma ,\varsigma )=z^{-2}+ \sum_{m,n\in \mathbb{Z},(m,n)\neq
(0,0)} \left[ (z-m\sigma -n\varsigma )^{-2}- (m\sigma +n\varsigma
)^{-2}\right].  \label{Weierstrass}
\end{equation}%
Choosing $\varsigma =2\pi i$ and $\sigma =2\pi i\tau $ (where $\tau$ 
 lies in the complex upper half-plane $\mathbb{H}$), we define 
\begin{eqnarray}
P_{2}(\tau ,z) &=&\wp (z,2\pi i\tau ,2\pi i)+E_{2}(\tau )  \notag \\
&=&\frac{1}{z^{2}}+\sum_{k=2}^{\infty }(k-1)E_{k}(\tau )z^{k-2}.  \label{P2}
\end{eqnarray}%
Here, $E_{k}(\tau )$ is equal to $0$ for $k$ odd, and for $k$ even is the
Eisenstein series \cite{Se} 
\begin{equation*}
E_{k}(\tau )=-\frac{B_{k}}{k!}+\frac{2}{(k-1)!}\sum_{n\geq 1}\sigma
_{k-1}(n)q^{n}.
\end{equation*}%
Here we take $q=\exp (2\pi i\tau )$; $\sigma
_{k-1}(n)=\sum_{d\mid n}d^{k-1}$, and $B_{k}$ is the $k$th Bernoulli number
defined by 
\begin{eqnarray*}
\frac{t}{e^{t}-1}-1+\frac{t}{2} 
=
\sum_{k\geq 2}B_{k}\frac{t^{k}}{k!} 
=
{\frac{1}{12}}{t}^{2}-{\frac{1}{720}}{t}^{4}+{\frac{1}{30240}}{t}^{6}+O({t%
}^{8}).
\end{eqnarray*}%
$P_{2}$ can be alternatively expressed as 
\begin{equation}
P_{2}(\tau ,z)=\frac{q_{z}}{(q_{z}-1)^{2}}+\sum_{n\geq 1}\frac{nq^{n}}{%
1-q^{n}}(q_{z}^{n}+q_{z}^{-n}),  \label{P2exp}
\end{equation}%
where $q_{z}=\exp (z)$.
 If $k\geq 4$ the
 first three Eisenstein series $E_{2}(\tau ),E_{4}(\tau ),E_{6}(\tau )$
are algebraically independent and generate a weighted polynomial algebra 
$Q=
\mathbb{C}[E_{2}(\tau )$, $E_{4}(\tau ),E_{6}(\tau )]$.
We define $P_{1}(\tau ,z)$ by 
\begin{equation}
P_{1}(\tau ,z)=\frac{1}{z}-\sum_{k\geq 2}E_{k}(\tau )z^{k-1},  \label{P1}
\end{equation}%
where $P_{2}=-\frac{d}{dz}P_{1}$ and $P_{1}+zE_{2}$ is the classical
Weierstrass zeta function. 
We also define $P_{0}(\tau ,z)$, up to a choice of the logarithmic branch,
by 
\begin{equation}
P_{0}(\tau ,z)=-\log (z)+\sum_{k\geq 2}\frac{1}{k}E_{k}(\tau )z^{k},
\label{P0}
\end{equation}%
where $P_{1}=-\frac{d}{dz}P_{0}$. 
Define elliptic functions $P_{k}(\tau ,z)\,$ for $k\geq 3\,$ from the
analytic expansion 
\begin{equation}
P_{1}(\tau ,z-w)=\sum_{k\geq 1}P_{k}(\tau ,z)w^{k-1}  \label{P1Pnexpansion}
\end{equation}
where 
\begin{equation}
P_{k}(\tau ,z)=\frac{(-1)^{k-1}}{(k-1)!}\frac{d^{k-1}}{dz^{k-1}}P_{1}(\tau
,z)=\frac{1}{z^{k}}+E_{k}+O(z).  \label{Pkdef}
\end{equation}
Finally, define for $k,l=1,2,\ldots $ 
\begin{eqnarray}
C(k,l) &=&C(k,l,\tau )=(-1)^{k+1}\frac{(k+l-1)!}{(k-1)!(l-1)!}E_{k+l}(\tau ),
\label{Ckldef} \\
D(k,l,z) &=&D(k,l,\tau ,z)=(-1)^{k+1}\frac{(k+l-1)!}{(k-1)!(l-1)!}%
P_{k+l}(\tau ,z).  \label{Dkldef}
\end{eqnarray}%
$\,$Note that $C(k,l)=C(l,k)$ and $D(k,l,z)=(-1)^{k+l}D(l,k,z)$. These
naturally arise in the analytic expansions (in appropriate domains) 
\begin{equation}
P_{2}(\tau ,z-w)=\frac{1}{(z-w)^{2}}+\sum_{k,l\geq 1}C(k,l)z^{l-1}w^{k-1},
\label{P2expansion}
\end{equation}%
and for $k\geq 1$ 
\begin{eqnarray}
P_{k+1}(\tau ,z) &=&\frac{1}{z^{k+1}}+\frac{1}{k}\sum_{l\geq 1}C(k,l)z^{l-1},
\label{Pkexpansion} \\
P_{k+1}(\tau ,z-w) &=&\frac{1}{k}\sum_{l\geq 1}D(k,l,w)z^{l-1}.
\label{Pkzwexpansion}
\end{eqnarray}
Notation here is as follows: $A(\tau,\epsilon )$ is the infinite 
matrix with $(k,l)$-entry 
\begin{equation}
A(k,l,\tau,\epsilon )=\frac{\epsilon ^{(k+l)/2}}{\sqrt{kl}} 
C(k,l,\tau);  \label{Aki1}
\end{equation}%
Our setup is 
facilitated by an alternate description in terms of combinatorial
gadgets that we call chequered necklaces \cite{MT}. 
 They are certain kinds of
graphs with nodes labeled by positive integers and edges labeled by
quasimodular forms, and they play an important role in this section. 
It is useful to introduce an interpretation for complexes  
  in terms of the sum of weights of certain graphs. 
In particular, in this subsection, we construct a special simplicial manifold
defined
over the space of chequered necklaces. 
\subsection{Explicit construction of the simplicial manifold $X(G,n)$} 
Now we will show how to associate elements of $G$ to chequered necklaces. 
Recall that in general each element of $G$ has a form $W_g(z_i)=\sum\limits_{m \in \Z} g_{i, m} z^m$, 
Consider the elements 
$F(x_1, x_2, (h, \zeta))= ( \theta, 
W_{\overline{h}}(\zeta)\; W_{g_1}(z_1) \; W_{g_2}(z_2)   \; W_{h}(\zeta) )$, that 
belong to the space  
 $\Theta(ln, kn)$.  
Here $\overline{h}$ is dual to $h \in G$ with respect to the bilinear pairing. 
 Performing the summation over $h\in G$, 
$G(x_1, x_2, \zeta)=\sum\limits_{h \in G} (\theta, 
W_{\overline{h}}(\zeta)\; W_{g_1}(z_1) \; W_{g_2}(z_2)   \; W_{h}(\zeta) )$, 
we obtain (with appropriate choice of the complex parameter $\zeta$ \cite{MT}),  
an element $C(g_1, g_2, z_1-z_2, \tau)$ which expandes as \eqref{Aki1} in terms of $C(m, m', \tau)$ 
or $D(m, m', z_1-z_2, \tau)$. 
Thus we see make a connection between $G$-elements and a chequered necklaces.  

Let us explicitly construct the special manifold $X(G, n)$.  
A simplicial manifold is a simplicial complex for which the geometric realization 
is homeomorphic to a topological manifold.  
This is essentially the concept of a triangulation in topology.  
This can mean simply that a neighborhood of each vertex 
(i.e., the set of simplices that contain that point as a vertex) is homeomorphic to an $n$-dimensional ball.
%
Now let us see how sets of chequerd necklaces turn into a simplicial manifold. 
Chequered necklaces are one-simplexes in form of graphs. 
They are associated to elements of $G$. 
If ${\bf z}_n$-dependence of $G$-elements is taken into account then 
a chequered necklaces is considered as an $(n+2)$-simplex (due to dependence on extra two complex parameters
$\tau$ and $\epsilon$.  
The set of chequered necklaces $\mathcal{N}_m$, $m \ge 2$, is a set of special graphs. 
The spaces of simplicial complex are sums of weights of certain graphs (period matrix). 
Chequered necklaces $\mathcal{N}_m$ are connected graphs with $m \geq 2$ nodes, $(m-2)$ of 
which have valency $2$ and two of which have valency $1$ (these latter are the end nodes), 
together with an orientation, 
on the edges. 
These graphs represent simpleces. 
Graphs have vertices
 labeled by positive integers and edges
are labeled alternatively by $1$ or $2$ as one moves along the graph, e.g., 
\begin{equation*}
\ \overset{k_{1}}{\bullet }\overset{1}{\longrightarrow }\overset{k_{2}}{%
\bullet }\overset{2}{\longrightarrow }\overset{k_{3}}{\bullet }\overset{1}{%
\longrightarrow }\overset{k_{4}}{\bullet }\overset{2}{\longrightarrow }%
\overset{k_{5}}{\bullet }\overset{1}{\longrightarrow }\overset{k_{6}}{%
\bullet }
\end{equation*}
\end{definition}
To complete definition of neighborhoods, one defines a weight function
\begin{equation*}
\omega :\mathcal{N}_m \longrightarrow 
\mathbb{C}[E_{2}(\tau), E_{4}(\tau), E_{6} (\tau),\epsilon],  
\end{equation*}
as follows: if a chequered necklace $\mathcal N_n$ has edges $E$ labeled as 
$\overset {k}{\bullet }\overset{a}{\longrightarrow }\overset{l}{\bullet }$, $a=1$, $2$ then we 
define 
\begin{eqnarray}
\omega (E) &=&A(k,l,\tau, \epsilon ),  
\notag
 \\
\omega (\mathcal N_m) &=&\prod
\omega (E),  \label{wtedge}
\end{eqnarray}
where $A(k,l,\tau,\epsilon)$ is given by 
(\ref{Aki1}) and the
product is taken over all edges $E$ of $\mathcal N_m$. 
Here $\tau$, $\epsilon$ are complex parameters. 
We further define 
$\omega(\mathcal N_{0})=1$.
Recall that $\left\{E_{2}(\tau), E_{4}(\tau), E_{6} (\tau)\right\}$
form a basis for the Eisenstein series. 
After a normalization 
the weight functions define a map from $\mathcal{N}_m$ to an $(n+2)$-dimensional ball in $\C$.  
The metrix on $X(G, n)$ is given by non-degenerate bilinear pairing $(., .)$. 
Since $\omega(\mathcal N)$ is a map from $\mathcal N$ to 
$\mathbb{C}[E_{2}(\tau_{a}), E_{4}(\tau_{a}), E_{6} (\tau), \epsilon]$, 
 all the transition function of $X$ can be expressed via $\omega(\mathcal N)$. 
Together with functions on graphs these simpleces are homeomorphic to an $n$-dimensional ball. 
%

Now let us associate chequered necklace to elements of $G$ and form $X(G, n)$.  
According to the construction of Section \ref{functional}, 
 non-commutative coefficients of a formal Lie-algebraic series are elements of the algebraic completion $G$ of a
$\mathcal G$-module. 
For each element $g_{j, k}$, $k \in \C$, $j \ge \Z$, associate a diagram \cite{MT} representing it as action of 
generators on the union element. 
The properties of non-degenerate bilinear pairing $(.,.)$ allow us to find appropriate diagram for 
the element $\widetilde g_{j, k}$ of $\widetilde G$ dual to $g_{j, k}$. 
Recall the chequered necklace construction for elements $g_{j, k}$ used in \cite{MT}. 
 Chequered necklace is in one to one correspondence with the formation of an $G$-element, 
Associate a knot of such diagram to a point of $X(G, n)$. 
Each point of the $G$-necklace is endowed with a power of $z_j$, $j \in \Z$. 
Let us associate to a point on the $G$-necklace the 
zero power of $z_j$ the zero point of a local domain $U_j$ ob $X(G, n)$. 
The union $V(G, n)$ of all chequered necklaces together with local domains $U_j$ present 
in the definition of the double complex constitutes the cells of a skeleton for $G$. 

Now let us define the cohomology of the simplicial manifold $X(G, n)$. 
In our setup, the 
  cohomology is an invariant 
associating a graded ring with the manifold $X(G, n)$. 
Every continuous map $f: X \to Y$ determines a homomorphism from the cohomology 
ring of Y to that of X; this puts strong restrictions on the possible maps from X to Y. 
Unlike more subtle invariants such as homotopy groups, the cohomology ring tends to be 
computable in practice for spaces of interest.
Now we are able to define the chain-cochain complex as the space 
 $C^m_k(\omega_k(\mathcal N_m))= \left\{ \omega_k(\mathcal N_m)\right\}$, 
 of $\omega$-forms 
associated to a vertex on $X(G, n)$ as simplicial complex. 
The boundary operator defined as
$d_k(\omega_k(\mathcal N_{m+1}) ) = P_{\mathcal N}.\omega_{k}(\mathcal N_m)$,  
where the operator $P_{\mathcal N}$ extends an $m$-vertex chequered necklace by one vertex. 
Note that according to properties of meromorphic functions with prescribed behavior, 
and the identification of $G$-elements with necklace elements, 
the extension of a chequered necklace leads to the shift $k\to (k-1)$. 
The cohomology of $X(G, n)$ is defined as the cohomology of the complex $C^m_k(\omega(\mathcal N_m))$.  
\subsection{A counterpart of Bott-Segal theorem}
In this subsection we prove an analogue of Bott-Segal theorem \cite{BS}. 
Being equipped with the technique of chequered necklaces, we prove the following
\begin{proposition} 
\label{guga}
There exists a manifold $X(G, n)$ such that the cosimplicial cohomology of mero 
functions for $\mathfrak g$    
 on a smooth complex manifold $M$   
is equivalent to the  
cohomology of $X(G, n)$, i.e.,  
\begin{displaymath}
\label{quota}
H^*_{* \; cos}(G, \; \U    
) \cong H^*_{*}(X(G, n)). 
\end{displaymath}
\end{proposition}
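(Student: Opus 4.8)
The plan is to mimic the strategy of the Bott--Segal theorem \cite{BS} and of Theorem 4 of \cite{wag}, transporting it through the chequered-necklace dictionary set up in the previous subsection. First I would fix a covering $\U = \{U_j\}$ of $M$ by Stein open sets and recall that, by the Proposition of the preceding section, $H^*_{*\,cos}(\check C(G,\U))$ computes the graded differential cohomology $H^*_{*\,dg}(\Gamma(\mathcal G({\bf z}),M))$, so it suffices to identify the latter with $H^*_*(X(G,n))$. The key observation is that the construction of $X(G,n)$ already provides, vertex by vertex, a simplicial cochain complex $C^m_k(\omega_k(\mathcal N_m))$ whose boundary operator $d_k(\omega_k(\mathcal N_{m+1})) = P_{\mathcal N}.\omega_k(\mathcal N_m)$ is, under the identification of $G$-elements with necklace weights \eqref{wtedge}, precisely the operator $D^l_k$ of \eqref{hatdelta} written in necklace coordinates: extending a chequered necklace by one vertex corresponds exactly to the insertion operators $T_i(W_V(\cdot))$, and the induced shift $k \to k-1$ matches the target $C^{l+1}_{k-1}$ in Proposition \ref{cochainprop}.

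The steps, in order, would be: (i) define the comparison map $\Phi : C^l_k(G,\U) \to C^m_k(\omega_k(\mathcal N_m))$ sending a mero function $F({\bf x}_l)$ to the sum of weights of the chequered necklaces obtained by the summation over $h \in G$ described above, i.e. $F \mapsto \sum_{h} (\theta, W_{\overline h}(\zeta)\,\cdots\,W_h(\zeta))$, which by the choice of $\zeta$ from \cite{MT} lands in $\mathbb C[E_2,E_4,E_6,\epsilon]$; (ii) check that $\Phi$ intertwines $D^l_k$ with $d_k$ — this is the content of the remark that $P_{\mathcal N}$ realizes the coboundary, and follows from the $T_G$-derivative and $K_G$-conjugation properties \eqref{cond1}--\eqref{loconj} together with the expansion identities \eqref{P2expansion}--\eqref{Pkzwexpansion} governing how $C(k,l)$ and $D(k,l,z)$ combine when a necklace is extended; (iii) verify $\Phi$ is a quasi-isomorphism on each Stein piece, using that on a Stein (equivalently $\U$-acyclic) open set both sides reduce to the Chevalley--Eilenberg cohomology of the same Lie algebra of $G$-valued series, exactly as in the local computation of \cite{wag}; (iv) invoke Proposition 5.9 of \cite{BS} to conclude that the induced map on realizations $|C^l_k(\mathcal G({\bf z}),M)| \to |C^m_k(\omega(\mathcal N_m))|$ is a cohomology equivalence, the hypotheses being met by the local quasi-isomorphism of step (iii) together with the K\"unneth theorem as in \cite{wag}; and (v) identify the realization $|C^m_k(\omega(\mathcal N_m))|$ with the singular cohomology $H^*_*(X(G,n))$, using that the weight functions, after normalization, define the charts of $X(G,n)$ as an $(n+2)$-dimensional simplicial manifold and that the skeleton $V(G,n)$ assembled from all necklaces and the domains $U_j$ furnishes a CW-structure computing $H^*_*(X(G,n))$.

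The main obstacle I expect is step (iii): showing that $\Phi$ is genuinely a quasi-isomorphism locally, rather than merely a well-defined chain map. One must control that the passage from a mero function to a finite sum of necklace weights loses no cohomological information — equivalently, that every necklace weight is hit and that the kernel of $\Phi$ modulo images is trivial. This requires knowing that the bilinear pairing $(.,.)$ on $G$ is non-degenerate in the strong sense that the expansion coefficients $C(k,l,\tau)$, $D(k,l,\tau,z)$ exhaust the possible analytic singular behaviors permitted by Definition \ref{defcomp}, and that the orders of poles $\beta(g_{l',i},g_{l'',j})$ are compatible with the polynomial degrees appearing in \eqref{Aki1}. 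A secondary subtlety is checking convergence of the sum $\sum_{h\in G}$ defining $\Phi$ in the relevant domains $|z_i|>|z_s|>0$; this is where the absolute convergence clauses of Definition \ref{defcomp} are used, and one should cite the corresponding estimates of \cite{MT, Huang}. Once these local points are settled, the globalization via the thickened nerve $N_*$ and partitions of unity is routine, following \cite{BS, wag} verbatim.
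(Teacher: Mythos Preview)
Your proposal is correct in outline but follows a genuinely different route from the paper. The paper's argument is more geometric and closer to the classical Gelfand--Fuks computation for $W_n$: after recalling the necklace construction of $X(G,n)$, it defines a map $\pi: V(G,n) \to G(\infty,n)$ from the $G$-skeleton (the union of all chequered necklaces together with the domains $U_j$) to the infinite Grassmannian, and realizes $X(G,n)$ as an open neighborhood of the inverse image of the skeleton of $G(\infty,n)$ under $\pi$; the identification of cohomologies is then asserted by analogy with the $2n$-skeleton picture for $W_n$ in \cite{Fuk, FF1988}, and the paper remarks afterward that one could alternatively run the Hochschild--Serre/Leray spectral sequence comparison as in those references. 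By contrast, you build an explicit comparison chain map $\Phi$ and globalize a local quasi-isomorphism via Proposition~5.9 of \cite{BS} and the K\"unneth argument --- machinery the paper deploys in the \emph{previous} proposition (the one relating $H^*_{*\,dg}$ and $H^*_{*\,cos}$), not here. Your approach is more self-contained and makes the role of the Stein cover and the local computation transparent; the paper's approach leans on the Grassmannian model and the existing $W_n$ theory, which is shorter but leaves more to the reader. Both are legitimate, and your identification of step~(iii) as the crux is accurate; note that the paper sidesteps this by appealing directly to the skeleton/Grassmannian analogy rather than proving a pointwise quasi-isomorphism.
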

%
\begin{proof}
The manifold $X(G, n)$ and 
 its cohomology $H^*_{* \; sing}(X(G, n))$ were constructed in previous subsections.   
According to the construction of Section \ref{functional}, 
 non-commutative coefficients of a formal Lie-algebraic series are elements of a $\mathcal G$-module $G$. 
For each element $g_{j, k}$, $k \in \C$, $j \ge \Z$, we associate (by using the summation over matrix elements 
described in corresponding subsection) 
 a diagram \cite{MT} representing it as action of 
generators on the union element. 
The properties of non-degenerate bilinear pairing $(.,.)$ allow us to find appropriate diagram for 
the element $\widetilde g_{j, k}$ of $\widetilde G$ dual to $g_{j, k}$. 
Recall the chequered necklace construction for elements $g_{j, k}$ used in \cite{MT}. 
Associate a knot of such diagram to a point of $X(G, n)$. 
Each point of the $G$-necklace is endowed with a power of $z_j$, $j \in \Z$. 
Let us associate to a point on the $G$-necklace the 
zero power of $z_j$ the zero point of a local domain $U_j$ ob $X(G, n)$. 
The union $V(G, n)$ of all chequered necklaces together with local domains $U_j$ present 
in the definition of the double complex constitutes the cells of a skeleton for $G$. 
Thus, we obtain an analog of a $2n$-skeleton for $W_n$ of formal vector fields.
In contrast to \cite{Fuk, FF1988} it is endowed with a power of $j$-th formal parameter $z_j$. 
We define a map  
$\pi: V(G,n) \to G(\infty,n)$,    
from the the $G$-skeleton  
to an infinite Grassmanian $G(\infty, n)$. 
Since the inverse image of the union of the cells is not a manifold, 
we consider 
 an open neighborhood of the inverse image under $\pi$ of the $G$-skeleton of the Grassmannian
$G(\infty, n)$. 
The union of such open neighborhoods constitutes the manifold $X(G, p)$. 
This is the cohomology of double Lie-algebraic complexes $C^l_k(\Theta(n,k) )$ which is the union of complexes 
 $C^l_k(\Theta(1, k) )$ for each local coordinate and $g_{jk}$-generators.  
It coincides with 
 the cosimplicial cohomology $H^*_{*\; cos}( {\mathcal G}({\bf z}), \U)$ of $W_n$ of $n$ complex 
variables. 
\end{proof}
\begin{remark}
It is also analogue of cohomology equivalence of $H^*_*(G, \U)$ and cohomology of a groupoid 
shown in \cite{CM}.  
\end{remark}
Note that another way to prove Proposition \ref{guga} is to use the same technique as in
\cite{Fuk, FF1988} since 
 for the complex $C^l_k({\mathcal G}({\bf z}), \U)$ there exist converging spectral sequences.   
i.e., to show that 
an isomorphism
of the Hochschild-Serre spectral sequence \cite{Ho} for the subalgebra $gl(n)$
with the Leray spectral of the restriction to the $2n$ skeleton of
the universal $U(n)$ principal bundle. 
\section{Conclusions}
In this section we list multiple  
 applications of the research of this paper are in conformal field theory 
\cite{Fei, wag, BZF, BPZ, DiMaSe, TUY}, in deformation theory \cite{BG, HinSch}, and in the theory
of foliations \cite{Bott}. 
\subsection{Applications in conformal field theory and moduli spaces}
In \cite{Fei, BeiFeiMaz} 
applications of cosimplicial computations on compact Riemann surfaces in 
conformal field \cite{DiMaSe, BZF, BPZ} theory were treated. 
As we deal with special homology, we replace the sheaf of
holomorphic vector fields 
by the sheaf of meromorhic functions associated to corresponding Lie algebra.   
In \cite{Fei}, for Riemann surface $\Sigma^{(g)}$, 
Feigin calculated the cosimplicial homology of
$Lie(\Sigma^{(g)})$ with values in the 
representations mentioned in Introduction. 
It is possible to compute cosimplicial homology of a space of meromorphic function complexes 
associated to various Lie algebras.  
The space of coinvariants on the right hand side
 defining so-called modular functor is usually associated to
locally defined objects. 
We will 
 obtain 
its homological description in terms of 
globally defined objects.
The space of coinvariants supposed to be 
the continuous
dual to the local ring completion  of the moduli space of
compact Riemann surfaces of genus $g\geq 2$ at the point $\Sigma^{(g)}$,
provided that $\Sigma^{(g)}$ is a smooth point. 
This gives an important link between
Lie algebra homology and the geometry of the moduli space. 

\subsection{Applications in deformation theory}
%
{\it Deformations of complex manifolds.} 
Cosimplician considerations above are applicable to
  cohomology computations 
in the 
 deformation theory of complex manifolds 
\cite{Ma, Fei, HinSch, GerSch}. 
%
The completion of a local ring of moduli space at a given point $M$
is isomorphic to the dual of the Lie algebra of $M$-infinitesimal automorphisms zero-th homology group.  
 This links Lie algebra homology and
  geometry of the moduli space in a formal neighborhood of a point.
We expect results in this direction for higher dimensional complex manifolds. 
In \cite{wag} we find the condition for the first cohomology in the case of 
higher dimensional
complex manifolds $M$. 
For restricted function cohomology one can consider also related the deformation 
theory following Kodaira and Spencer \cite{Kod}.  

\medskip 
{\it Deformations of Lie algebras.}
 It is well known that the Lie algebra cohomology 
with values in the adjoint representation $H^*(L,L)$ of a Lie algebra
$L$ answers questions about deformations of $L$ as an algebraic
object. 
For example, $H^2(L,L)$ can be interpreted as the space of
equivalence classes of infinitesimal deformations of $L$, see
\cite{Fuk, FF1988}. 
There arise natural questions of this type for  
 bi-graded differential  Lie algebras resulting from chain complex constructions.   
For a disk 
$D\subset\C^n$, holomorphic vector fields   
are rigid, i.e., 
\cite{wag} 
$H^*_{cont}(Hol(D),Hol(D)) = 0$. 
Using cosimplicial cohomology results, we will study rigidity of  
  bi-graded differential Lie algebras resulting from chain complex constructions. 
For a 
compact Riemann surface $\Sigma^{(g)}$
of genus $g\geq 2$, we expect to find a relations for cohomologies in terms of 
elements of Fr\'echet spaces 
 given by the polynomials on $T_{\Sigma^{(g)}}{\mathcal N}(g,0)$.
  It's the space of formal power series on $T_{\Sigma^{(g)}}{\mathcal N}(g,0)^*$. 
This could be interpreted as a relation between cohomology with
adjoint coefficients of $\mathfrak{g}$, 
i.e., graded differential 
deformations of global sections of $\mathfrak{g}$, and deformations
of the underlying manifold.
As it is explained in \cite{Fei, wag},  
the choice of the coefficients in the Lie algebra cohomology 
determines a geometric object on the moduli space in a formal
neighborhood of a point. 
Namely,  trivial coefficients correspond to the
structure sheaf, adjoint coefficients correspond to vector fields,
adjoint coefficients in the universal enveloping algebra correspond to
differential operators.     

\subsection{Applications in foliation theory.}
Applications in foliation theory are inspired by the link between cohomology of
Lie algebras and characteristic classes of foliations 
\cite{Fuk, FF1988}. 
In \cite{wag} the author considered the  
case of characteristic classes of $g$-structures. 
%
For a complex manifold $M$, and ${\mathcal U} = \{U_i\}_{i\in I}$ a covering of
$M$ by open sets such that $I$ is a countable directed index
set, 
consider the sheave   
%
of meromorphic functions in cosimplicial setup given above. 
Denote by $W_{2n}|_{hol}$ the Lie subalgebra of
$W_{2n}$ generated by the $\frac{\partial}{\partial z_i}$ for
$i=1,\ldots,n$.
  Given a 
structure of meromorphic functions associated to such a covering,  
we obtain 
 that 
the space of moduli of
$\{\omega_U\}_{U\in{\mathcal U}}$ is isomorphic 
to the moduli space of 
$W_{2n}|_{hol}$-valued differential forms $\omega$.
 To such a structure 
we may assign as in \cite{wag} 
  characteristic classes by considering   
$H^*(|C^*_{cont}( {C}({\mathcal U}, \F 
))|)$. 
The 
cosimplicial meromorphic function 
 structure is defined 
 such that by inserting $p$-times
$\chi 
_{U_{i_0}\cap\ldots\cap U_{i_q}}$ into each 
$c\in C^p_{cont}(\prod_{i_0<\ldots<i_q} \F 
(U_{i_0}\cap\ldots\cap U_{i_q}))$, 
one associates 
an element $\chi$   
of the generalized ${\rm   C}$ech-de Rham complex 
associated to the covering ${\mathcal U}$ on $M$.
 By the standard reasoning 
this $\chi$ 
provide 
a well-defined cohomology class $[ \chi 
]$,
the characteristic class associated to the cosimplicial 
$\F$-structure.  

Finally, a relation to factorization algebras \cite{HK} will be cleared elsewhere.  
\section*{Acknowledgments}
The author would like to thank H. V. L\^e, A. Lytchak, P. Somberg, and P. Zusmanovich for related discussions. 

\end{document}